\documentclass[12pt,reqno]{amsart}

\usepackage{amssymb,array}
\usepackage{amsmath}
\usepackage{amsthm}
\usepackage{amsbsy,mathrsfs}
\usepackage{bm}
\usepackage[english]{babel}
\usepackage{graphicx}
\usepackage{color}
\usepackage{hyperref}
\usepackage[toc,page]{appendix}
\usepackage{thmtools}
\usepackage{thm-restate}
\usepackage{bbm}
\usepackage[shortlabels]{enumitem}
\usepackage{cleveref}
\usepackage{xcolor}

\divide
\oddsidemargin by 2 
\theoremstyle{plain}
\newtheorem{theorem}{Theorem}[section]
\newtheorem*{theorem*}{Theorem}

\newtheorem{lemma}[theorem]{Lemma}

\newtheorem{proposition}[theorem]{Proposition}

\newtheorem{computational theorem}[theorem]{Computational Theorem}

\theoremstyle{definition}

\newtheorem{remark}[theorem]{Remark}

\newtheoremstyle{named}{}{}{\itshape}{}{\bfseries}{.}{.5em}{\thmnote{#3 }#1}
\theoremstyle{named}

\theoremstyle{plain} 
\newcommand{\thistheoremname}{}
\newtheorem*{genericthm*}{\thistheoremname}

\begin{document}

\title[Turaev cobracket]{Turaev cobracket, synthetic geometry and applications}

\author{Moira Chas}
\address{Stony Brook Mathematics Department and Institute for Mathematical Sciences}
\email{moira.chas@stonybrook.edu}

\author{ Arpan Kabiraj}
\address{Department of Mathematics, Indian Institute of Technology Palakkad}
\email{arpaninto@iitpkd.ac.in}
\begin{abstract}

A teardrop geodesic on a hyperbolic surface is a closed curve that is geodesic except at exactly one point, called its vertex. One imagines a closed curve with one point held down as it deforms to a geodesic. For each teardrop geodesic, we give an explicit geometric construction of the closed geodesic in its free homotopy class. This construction also yields a relation between the length of the teardrop geodesic, the angle at its vertex, and the length of its geodesic representative. We then apply the construction to a self-intersecting closed geodesic at a chosen self-intersection point. Since, at that point, the geodesic splits into two directed teardrop geodesics, our construction gives a geometric description of Turaev’s cobracket in terms of geodesic representatives. Using this description we prove that if two self-intersection points yield a pair of canceling terms in the cobracket, then the angle at these points and the lengths of the small loops determined are the same. 

\end{abstract}
\maketitle
\section{Introduction}

In 1986 Goldman discovered a (fascinating in our opinion) Lie algebra in the free module generated by free homotopy classes of curves on a surface, \cite{goldman_invariant_1986}. The Lie bracket is  defined by combining two familiar operations: the based loop product and the intersection of immersed submanifolds.  More precisely, given two directed closed curves on the surface,  $\alpha$ and $\beta$ intersecting transversely in double points, each intersection point $P$ determines the free homotopy class of the loop product $\alpha\cdot_P\beta$, with a sign determined by the orientation of the two tangent vectors at $P$. The Goldman bracket is the signed sum of these classes over all intersection points.

A few years later,  \cite{turaev1991skein} Turaev complemented this structure by defining a Lie coalgebra, defined by combining, again, two well-known although more subtle operations, the based coproduct of loops and the self-intersection of immersed submanifolds. More precisely,  consider an oriented closed curve $\alpha$ that self-intersects only in transversal double points. Given one of these self-intersection points $P$, order the two tangent vectors of $\alpha$ at $P$ so that their order agrees with the orientation of the surface. Let $\alpha_1^P$ and $\alpha_2^P$ be the loops that start at $P$ in the directions of the first and second tangent vectors, respectively, follow $\alpha$ in its given orientation, and stop upon their first return to $P$.  The \emph{Turaev cobracket} is the linear extension to the free module generated by the free homotopy classes of non-contractible loops of the map defined on a generator $\langle\alpha\rangle$ by
$$
\delta\big(\langle\alpha\rangle\big)=\sum_{P\in\cap\alpha}
\Big(\langle\alpha^P_1\rangle_0\otimes\langle\alpha^P_2\rangle_0
-\langle\alpha^P_2\rangle_0\otimes\langle\alpha^P_1\rangle_0\Big),
$$
where $\langle\gamma\rangle_0$ denotes the free homotopy class of $\gamma$ if $\gamma$ is not contractible, and $0$ otherwise.. 
Turaev proved \cite{turaev1991skein} that the cobracket is independent of the choice of  representative of the free homotopy class $\langle\alpha\rangle$,  and satisfies the identities of a Lie coalgebra. 

Moreover, Turaev \cite{turaev1991skein}  showed that two structures, the Goldman Lie bracket and the Turaev cobracket, satisfy a compatibility relation, and thus they form a Lie bialgebra.

A natural question is the following: how much of the topology can be recovered from the algebra? In other words, if one knows the Lie bialgebra, what can one say about the minimal intersection and self-intersection structure of curves on the surface?

\subsection{What the Goldman–Turaev Lie bialgebra knows about intersections}

There is a substantial body of work indicating that this algebraic structure retains  abundant topological and geometric information. Goldman showed that when one of two free homotopy classes is represented by a simple closed curve, the vanishing of their Goldman bracket detects whether the two classes admit disjoint representatives \cite{goldman_invariant_1986}. This was strengthened by Chas, who showed that in this situation there is no cancellation in the Goldman bracket: the number of terms, counted with multiplicity, is precisely the geometric intersection number \cite{chas2010minimal}. For pairs of  non-simple curves, however, cancellations may occur, and the Goldman bracket can vanish even when the geometric intersection number is positive \cite{chas2004combinatorial, chas2010minimal}. Nevertheless, this loss of  information can largely be recovered by considering suitable powers and refinements of the bracket. Chas and Krongold \cite{chas2010algebraic} obtained algebraic characterizations of simple closed curves using Goldman brackets with powers on surfaces with boundary.  Chas and Gadgil \cite{chas2016extended} showed that an extended Goldman bracket can recover mutual intersection numbers and self-intersection numbers from suitable powers of curve classes. These results imply that although intersection data may not always be visible from a single bracket computation, it is  encoded in the Goldman Lie algebra.

There are parallel results concerning the Turaev cobracket and self-intersection of curves. Turaev \cite{turaev1991skein} observed that the cobracket of a power of a simple curve is zero, and asked whether the converse holds. This question was particularly inspiring to of one of us (Chas)  who was already thinking about cancellation of terms in the Goldman bracket. Chas answered Turaev's question negatively \cite{chas2004combinatorial}  for surfaces of positive genus and negative Euler characteristic by giving a family of examples of non-simple, non-power closed curves with cobracket zero. Le Donne \cite{ledonne2008lie} showed that the answer is positive in genus zero. 
As in the bracket case, geometric information appears when considering  powers: For surfaces with boundary, Chas and Krongold \cite{chas2015algebraic} proved that the vanishing of the Turaev cobracket of a power larger than two of a curve implies this curve is a power of a simple closed curve. Moreover, the number of terms in the cobracket of sufficiently high powers determines the minimal self-intersection number of a non proper power class. In a related direction, Cahn \cite{cahn2013generalization} introduced a refinement of the Turaev cobracket which gives an exact formula for the minimal self-intersection number. Taken together, these results strongly suggest that the Goldman-Turaev Lie bialgebra is not  merely  an algebraic structure associated with a surface, but also a repository of geometric and topological information about its curves. In particular, a  description of the Turaev cobracket by synthetic geometry provides a natural framework for investigating precisely which features of the minimal self-intersection structure survive, and how they can be recovered algebraically.

Further evidence that the Goldman algebra retains subtle geometric information is provided by Kabiraj \cite[Theorem 5.1]{kabiraj2018equal}.
Let $x$ and $y$ be oriented closed geodesics on a hyperbolic surface, and let $p,q\in x\cap y$. Kabiraj proved that if the two intersection points determine the same free homotopy class in the Goldman bracket,
then the appropriately oriented angles of intersection at $p$ and $q$ are equal for every hyperbolic metric on the
surface.  In particular, this result provides a geometric obstruction to cancellation among the
terms of the Goldman bracket.

\subsection{What we are adding here}

Throughout, $\Sigma$ denotes an oriented surface equipped with a complete hyperbolic metric (constant curvature $-1$) with no cusps and  possibly empty geodesic boundary. For a closed directed curve $y$ on $\Sigma$ that is not homotopically trivial, we denote by $y^\sharp$ the unique closed geodesic in its free homotopy class. If $y$ is a piecewise geodesic, we denote its length by $\ell_y$.

We give a new, purely geometric construction of the terms of Turaev's cobracket in terms of hyperbolic geometry. This is the cobracket counterpart of the geometric definition of the Goldman bracket in terms of closed geodesics given in \cite{chas2023explicitliebracketclosed}.

\begin{figure}[ht]
	\centering
	\includegraphics[trim={20 260 20 20},  width=0.8 \textwidth]{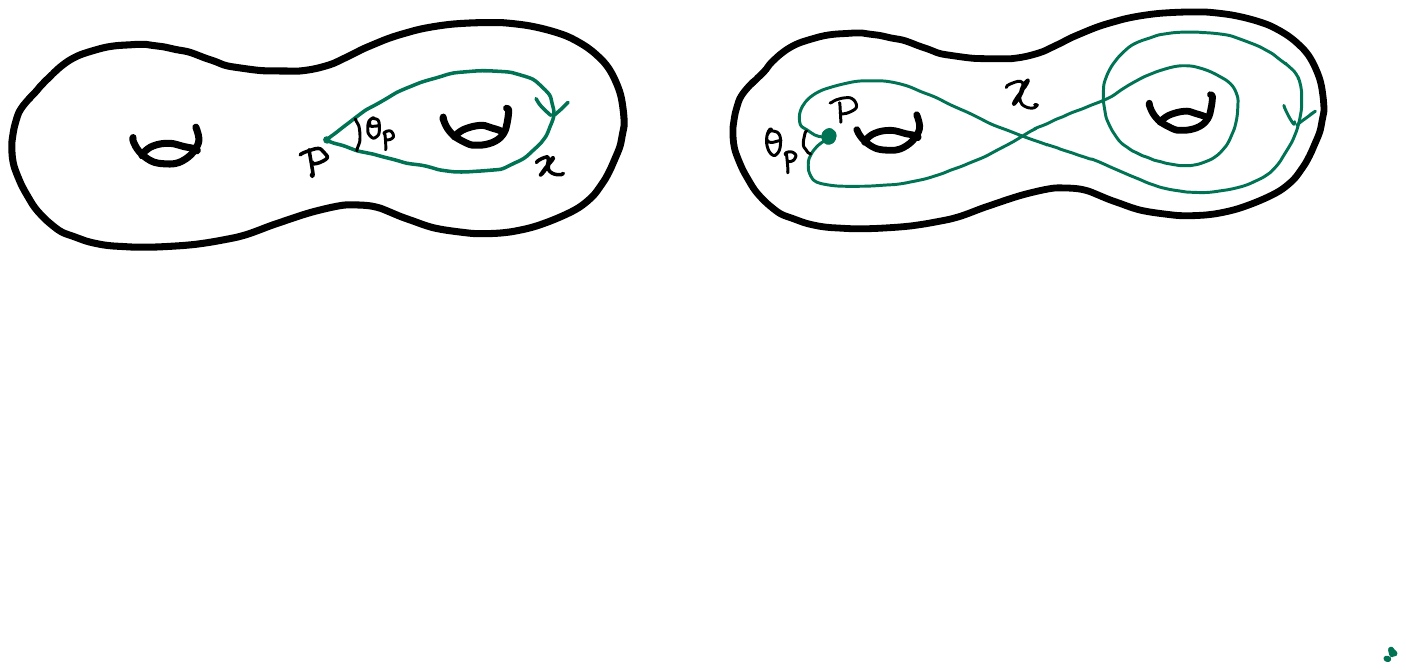}
\caption{Teardrop geodesics.}
	\label{fig:teardrop}
\end{figure}

The basic objects are teardrop geodesics. A \emph{teardrop geodesic} is a directed closed curve on $\Sigma$ that is geodesic except at exactly one point, its \emph{vertex}, (see Figure~\ref{fig:teardrop}, for a  schematic illustration) . If the vertex is $P$, the curve forms there an angle $\theta_P \in (0,\pi)$. For each teardrop geodesic $x$ with vertex $P$, we give an explicit construction, in the universal cover, of a lift of $x^\sharp$. The construction uses only reflections in geodesics determined by $x$. As a consequence we obtain (Theorem~\ref{thm:teardrop_metric}) the relation
\[
\sin\frac{\theta_P}{2}\cosh\frac{\ell_x}{2}=\cosh\frac{\ell_{x^\sharp}}{2}.
\]

Now let $x$ be a closed directed geodesic and $P$ a transverse self-intersection point of $x$. (For simplicity, we assume here that $x$ is not a proper power; the case of proper powers is addressed in Section~\ref{sec:definition}) Cutting $x$ at $P$ produces two directed teardrop geodesics $x_1^P$ and $x_2^P$, both with vertex $P$ and vertex angle $\theta_P$. They are ordered so that the branch of $x_1^P$ leaving $P$, followed by the branch of $x_2^P$ leaving $P$, agrees with the orientation of $\Sigma$ (see Figure~\ref{fig:splitting}). Applying our construction to $x_1^P$ and $x_2^P$ gives explicit lifts of $(x_1^P)^\sharp$ and $(x_2^P)^\sharp$. Define
\[
\delta'(x)=\sum_{P}\Bigl((x_1^P)^\sharp\otimes(x_2^P)^\sharp-(x_2^P)^\sharp\otimes(x_1^P)^\sharp\Bigr),
\]
where the sum runs over the transverse pairs of branches of $x$. We prove (Theorem~\ref{thm:isomorphism}) that the identification of free homotopy classes with closed geodesics is an isomorphism of Lie coalgebras between Turaev's coalgebra and the operation $\delta'$ defined on the linear combination of closed geodesics. In other words, Turaev's cobracket can be computed entirely from geodesics and their explicit hyperbolic geometry.

As an application, we obtain a rigidity result for cancellation (Proposition~\ref{prop:cancellation_rigidity}). Let $x$ be a non-simple free homotopy class in $\Sigma$ that is not a proper power. For any metric as above, the geodesic representative of $x$ realizes the minimal self-intersection number of $x$ \cite{hass1985intersections}. Moreover, its self-intersection points can be naturally identified as the metric varies.  Suppose the terms of the cobracket of $x$ corresponding to two self-intersection points $P$ and $Q$ cancel. Then, for every  hyperbolic metric on $\Sigma$ ,
\[
\theta_P=\theta_Q,\qquad
\ell_{x_1^P}=\ell_{x_2^Q},\qquad
\ell_{x_2^P}=\ell_{x_1^Q}.
\]
That is, the self-intersection angles at $P$ and $Q$ are equal, and the lengths of the matched teardrop geodesics are equal. This may be viewed as the cobracket analogue of Kabiraj's theorem \cite[Theorem 5.1]{kabiraj2018equal} on angle equality for terms of the Goldman bracket.

Since our results have necessary conditions for cancellation of a pair of terms of Turaev's cobracket, they may help in finding a characterization of classes of curves in its kernel. In relation to this, it follows from Chas' observation in \cite[Example 5.7]{chas2004combinatorial}, that on a torus with one boundary component, curves of the form $a^ib^j$, for $i, j \ge 2$, where $a$ and $b$ are two simple curves intersecting exactly once, have cobracket zero. Further computer experiments suggest that, on this surface, every non-simple class in the kernel that is not a proper power is of this type.

It is tempting to conjecture that, on a general surface, every closed geodesic with zero Turaev cobracket that is neither simple nor  a proper power is contained in an essential subsurface homeomorphic to a torus with one boundary component, and has the form described in the conjecture above. However, computational limitations have prevented us from testing this possibility , since the word lengths of relevant free homotopy classes required for informative experiments become too large on surfaces of genus greater than one.

The Goldman–Turaev Lie bialgebra was extended to all manifolds in String Topology \cite{chas1999string}. Perhaps some of the geometry above can be relevant for certain geometrical manifolds in higher dimensions.



\section{Teardrop geodesic}
\label{sec:teardrop}

Let $\Sigma$ be an orientable surface with a complete metric of constant negative curvature $-1$, no cusps and (possibly empty) geodesic boundary. Observe that, since $\Sigma$ is not necessarily compact, it can have infinite genus.

The universal cover of the surface $\Sigma$ is identified with the hyperbolic plane $\mathbb{H}$ or with a co"nvex subset of $\mathbb{H}$ if the surface has non-empty boundary.  We regard its deck transformations as restrictions of a Fuchsian group acting on \(\mathbb H\). All constructions below are carried out in \(\mathbb H\), containing the universal cover as a convex subset when the surface has non-empty boundary.

The axis of a hyperbolic transformation  $T$  on  $\mathbb{H}$ is denoted by $A_T$, and the length of a finite piecewise geodesic $d$ is denoted by $\ell_d$..  

A \emph{teardrop geodesic} $x$ is a directed closed curve on $\Sigma$ that follows a geodesic path except at exactly one point, called its \emph{vertex}, as shown in Figure \ref{fig:teardrop}.  Note that a teardrop geodesic can self-intersect and that the angle at the vertex cannot be zero. 

If $x$ is a teardrop geodesic, we denote by $x^\sharp$ the unique closed geodesic freely homotopic to $x$. If the free homotopy class of $x$ is the $n$-th power of another class, then $x^\sharp$ denotes the corresponding geodesic traversed $n$ times; in particular, its length is counted with multiplicity.

The curves drawn on surfaces (Figures~\ref{fig:teardrop}, \ref{fig:teardroproj} and \ref{fig:splitting}) throughout this paper are schematic, to help the reader visualize the objects. The figures in the hyperbolic plane (Figures~\ref{fig:teardroplift}, \ref{fig:teardropperp}, \ref{fig:teardroppent}, \ref{fig:quadrilateral} and \ref{fig:cobracketgeom}) depict  configurations in standard models of the hyperbolic plane.

Consider a teardrop geodesic $x$ with vertex $P$. The branch of $x$ arriving at $P$ and the branch of $x$ leaving $P$ determine two angles at $P$, whose sum is $2\pi$.  We denote by $\theta_P$ the smaller of these two angles; in particular $\theta_P \in (0,\pi)$.

We now describe a construction that yields a lift of the geodesic representative in the free homotopy class of $x$ in $\Sigma$ to the hyperbolic plane $\mathbb{H}$.  

Choose a lift $P'$ of the point $P$ in $\mathbb{H}$, and consider the two directed lifts $x_1$ and $x_2$ of $x$ passing through $P'$,  as shown in Figure \ref{fig:teardroplift}.  These two lifts are directed geodesic segments of length $\ell_x$,  $x_1$ directed towards $P'$ and $x_2$, leaving $P'$. The angle at $P'$, smaller than $\pi$, between $x_1$ and $x_2$
is equal to $\theta_P$. The geodesic ray that bisects $\theta_P$ is denoted by $c.$

For each $i \in \{1, 2\}$, denote by $R_i$ the midpoint of $x_i$, and by $r_i$ the perpendicular to $x_i$ passing through $R_i$. See Figure \ref{fig:teardroplift}.

\begin{figure}[ht]
	\centering
\includegraphics[trim={2cm 1cm 2cm 1cm}, width=0.4\textwidth]{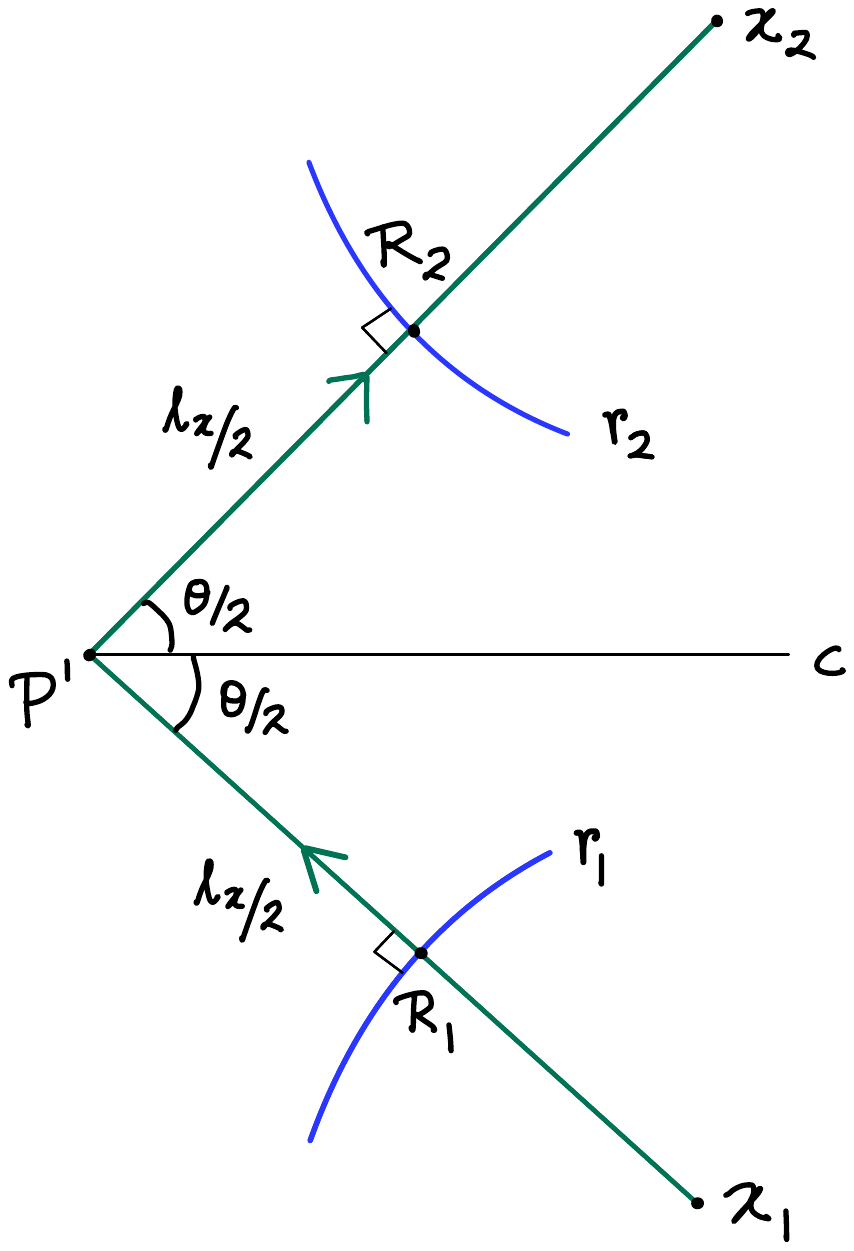}
\caption{Lifts the teardrop geodesics $x_1$ and $x_2$ of $x$ through $P'$, the perpendiculars $r_1$ and $r_2$ at their midpoints, and the bisector $c$.}
	\label{fig:teardroplift}
\end{figure}

For any geodesic segment, ray, or complete geodesic $a$ in $\mathbb H$, let $\rho_a$ denote reflection in the unique complete geodesic containing $a$.

\begin{lemma}\label{lemma:common_perp}
    The geodesics $r_1$,  $r_2$, and the ray $c$ admit a common perpendicular. 
\end{lemma}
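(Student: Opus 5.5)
The plan is to realize the deck transformation associated with $x$ as a product of two reflections in two different ways. The axis of that transformation will then be the common perpendicular.

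Let $T$ be the deck transformation with $T(x_1)=x_2$ as directed segments. Thus, if $P'_{-1}=T^{-1}(P')$ is the initial point of $x_1$, then $T(P'_{-1})=P'$ and $T(P')$ is the endpoint of $x_2$.

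\textbf{Step 1: $T$ is hyperbolic.} First, $x$ is not null-homotopic. Otherwise $x_1$ would close up in $\mathbb H$ to a geodesic monogon with one vertex of angle $<\pi$, which Gauss--Bonnet (or uniqueness of geodesics between two points) rules out. Hence $T\neq \mathrm{id}$. Deck transformations act freely, so $T$ is not elliptic. Since $\Sigma$ has no cusps, $T$ is not parabolic. Therefore $T$ is hyperbolic with axis $A_T$.

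\textbf{Step 2: $T=\rho_{r_2}\circ\rho_c=\rho_c\circ\rho_{r_1}$.} The reflection $\rho_c$ fixes $P'$ and swaps the complete geodesics containing $x_1$ and $x_2$, because $c$ bisects the angle between them. It therefore sends $x_1$, traversed from $P'_{-1}$ to $P'$, onto the segment from $T(P')$ to $P'$ of length $\ell_x$, that is, onto $x_2$ with reversed direction. Next, $\rho_{r_2}$ is the reflection in the perpendicular bisector of $x_2$, so it reverses $x_2$ again. Hence the orientation-preserving isometry $\rho_{r_2}\circ\rho_c$ sends the directed segment $x_1$ to the directed segment $x_2$. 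It agrees with $T$ on a nondegenerate segment, so it equals $T$. Symmetrically, $\rho_{r_1}$ reverses $x_1$, and $\rho_c$ then carries the reversed $x_1$ onto $x_2$ with the correct direction. This gives $T=\rho_c\circ\rho_{r_1}$.

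\textbf{Step 3: conclusion.} A product $\rho_a\circ\rho_b$ of reflections in two complete geodesics is elliptic if $a,b$ intersect and parabolic if they are asymptotic. It is hyperbolic exactly when they are ultraparallel, and then its axis is their common perpendicular. By Steps 1 and 2, the full geodesic containing $c$ is ultraparallel to both $r_1$ and $r_2$, and in each case the common perpendicular is $A_T$. Hence $A_T$ is perpendicular to $r_1$, to $r_2$ and to the geodesic containing $c$.

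\textbf{Main obstacle.} The delicate point is that the statement concerns the ray $c$ rather than the complete geodesic containing it, so we must check that $A_T$ meets the ray itself. I would argue as follows.
\begin{itemize}
\item Since $\rho_c$ swaps $r_1$ and $r_2$, the midpoints $R_1,R_2$ lie in the same closed half-plane bounded by the line $r_0$ through $P'$ perpendicular to $c$.
\item Because $\theta_P<\pi$, both $x_1$ and $x_2$ leave $P'$ into the side of $r_0$ containing $c$.
\item The line $r_0$ is the perpendicular to $c$ through $P'$. The perpendicular from $P'$ to $r_1$ is $x_1$ and has length $\ell_x/2>0$, so $r_1$ is not $r_0$.
\item By the symmetry under $\rho_c$, the foot of $A_T$ on the line of $c$ lies on the same side of $P'$ as $R_1,R_2$.
\end{itemize}
This last step is the one I would check most carefully, for instance with a short convexity argument in the quadrilateral with vertices $P'$, $R_1$, and the feet of $A_T$ on $r_1$ and on $c$.
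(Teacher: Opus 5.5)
Your Steps 1--3 are correct and are essentially the paper's argument: you identify the deck transformation carrying $x_1$ to $x_2$ with $\rho_{r_2}\circ\rho_c$, and you exclude the elliptic and parabolic cases using freeness of the action and the absence of cusps. The only cosmetic difference is that you get perpendicularity to $r_1$ from the second factorization $\rho_c\circ\rho_{r_1}$, whereas the paper uses the $\rho_c$-invariance of the axis.

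The paper's proof only establishes perpendicularity to the complete geodesic containing $c$, so your ``main obstacle'' is not needed for the lemma as proved there. If you do want the foot on the ray itself, your last bullet does not give it, because $\rho_c$ fixes that geodesic pointwise. The angle sum in the quadrilateral $P'R_1S_1T_1$ does settle it. That quadrilateral is embedded, since its opposite sides lie on disjoint geodesics. Its three right angles then force an acute angle at $P'$, namely $\theta_P/2$ rather than $\pi-\theta_P/2$.
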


\begin{proof}

Since the ray $c$ bisects the angle at $P'$ between the segments $x_1$ and $x_2$, and these two segments have the same length $\ell_x$, the reflection $\rho_c$ maps the segment $x_1$ onto the segment $x_2$, reversing direction, and maps the midpoint $R_1$ to the midpoint $R_2$ (see Figure~\ref{fig:teardroplift}). Consequently $\rho_c$ maps the geodesic $r_1$ to the geodesic $r_2$. On the other hand, the reflection $\rho_{r_2}$ maps the segment $x_2$ onto itself, reversing direction and fixing $R_2$. Therefore the orientation-preserving isometry
\[
U=\rho_{r_2}\circ\rho_c
\]
maps the directed segment $x_1$ onto the directed segment $x_2$, and maps $R_1$ to $R_2$.

Both $x_1$ and $x_2$ project to the directed teardrop geodesic $x$, and the points $R_1$ and $R_2$ project to the same point of $\Sigma$. Hence there exists a covering transformation mapping the directed segment $x_1$ onto the directed segment $x_2$. Since an orientation-preserving isometry of $\mathbb{H}$ is uniquely determined by the image of a directed geodesic segment, this covering transformation coincides with $U$. Thus $U=\rho_{r_2}\circ\rho_c$ is a covering transformation. Moreover, $U$ is not the identity, because the segments $x_1$ and $x_2$ are distinct (the angle $\theta_P$ lies in $(0,\pi)$).

We now examine the relative positions of the geodesic containing the ray $c$ and the geodesic $r_2$; note that these are distinct geodesics, since both intersect the geodesic containing $x_2$ at different points. There are three possibilities.

If the two geodesics intersected at a point $M$ of $\mathbb{H}$, then $U$, being the product of the reflections through two intersecting geodesics, would be an elliptic isometry fixing $M$. This is impossible, because deck group is torsion-free and hence contains no nontrivial elliptic element.

If the two geodesics were asymptotic, sharing an endpoint on the boundary of $\mathbb{H}$, then $U$, being the product of the reflections through two asymptotic geodesics, would be a parabolic isometry fixing the boundary endpoint. A parabolic covering transformation produces a cusp on $\Sigma$, contradicting our assumption that $\Sigma$ has no cusps.

Hence, the geodesic containing the ray $c$ and the geodesic $r_2$ have a common perpendicular $x'$. 
Since $x'$ is perpendicular to the geodesic containing $c$, it is invariant under $\rho_c$; hence $x'=\rho_c(x')$ is also perpendicular to $\rho_c(r_2)=r_1$. Thus $x'$ is a common perpendicular of $r_1$, $r_2$, and $c$, as desired.


\end{proof}

The segments $r_1$ and $r_2$ have a common perpendicular by Lemma~\ref{lemma:common_perp}, which we denote by $x'$. We orient $x'$ from $r_1$,  to $r_2$ as shown in Figure \ref{fig:teardropperp}.

 Define the composition $X = \rho_{r_1} \circ \rho_c$. Since reflections are orientation-reversing isometries, the composition of these two reflections, $X$, is an orientation-preserving isometry.

\begin{figure}[ht]
	\centering
	\includegraphics[width=0.5\textwidth]{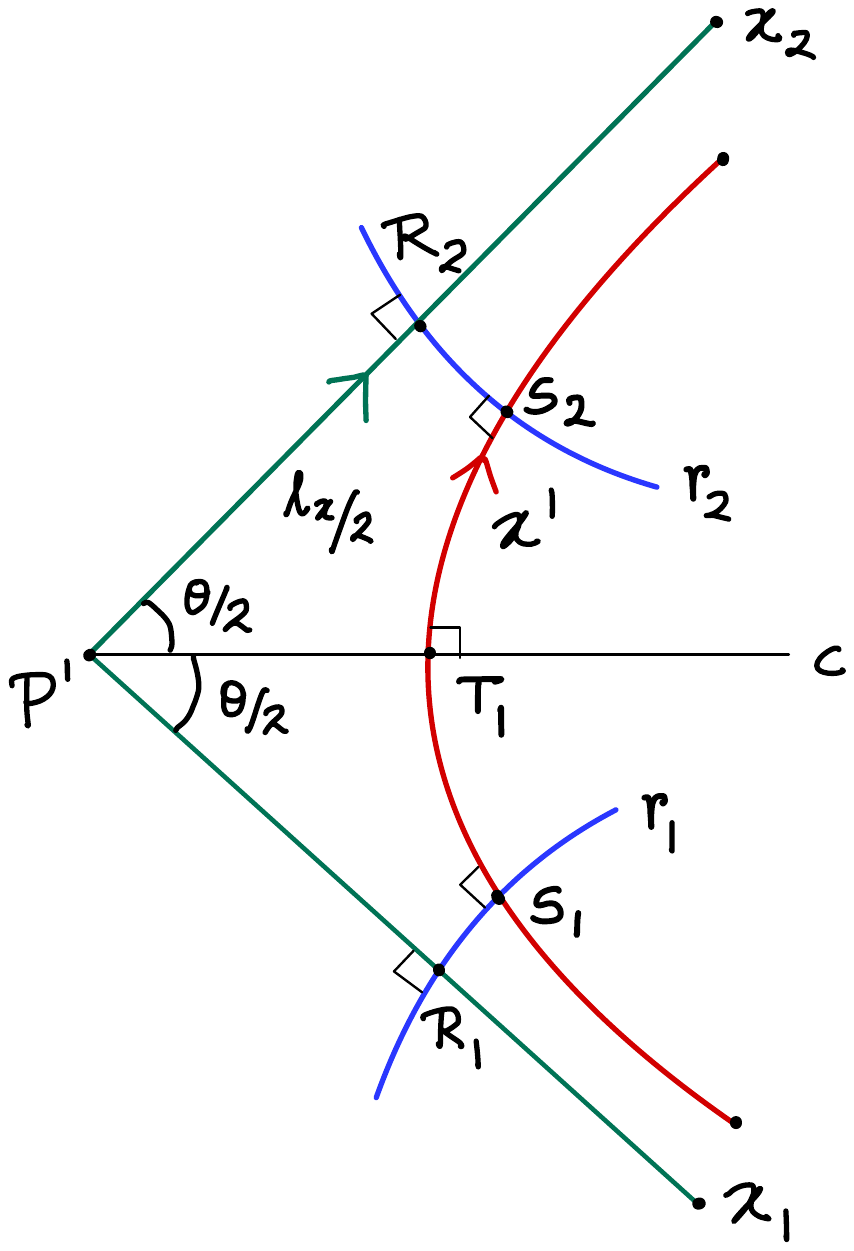}
\caption{The common perpendicular $x'$ of $r_1$, $r_2$, and $c$ is a full lift of $x^\sharp$.}
	\label{fig:teardropperp}
\end{figure}

\begin{lemma}\label{lem:geodesic}
The isometry $X$ is hyperbolic, and its axis $A_X$ is the geodesic $x'$, the common perpendicular of $r_1$, $r_2$, and $c.$ Moreover, $x'$ is a full lift of the unique geodesic freely homotopic to the teardrop geodesic $x$. The translation length of $X$ is equal to the distance between $r_1$ and $r_2$.
\end{lemma}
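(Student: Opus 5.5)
The plan is to reduce everything to the standard fact that a composition of reflections in two disjoint, non-asymptotic geodesics is a hyperbolic isometry. Its axis is their common perpendicular, and its translation length is twice the distance between them.

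\textbf{Step 1: $X$ is a hyperbolic deck transformation with axis $x'$.} From the proof of Lemma~\ref{lemma:common_perp}, $\rho_c$ maps $r_1$ to $r_2$, so $\rho_c\rho_{r_1}\rho_c=\rho_{r_2}$. Hence
\[
X=\rho_{r_1}\circ\rho_c=\rho_c\circ\rho_{r_2}=\rho_c\circ U\circ\rho_c ,
\]
with $U=\rho_{r_2}\circ\rho_c$. Alternatively, one can argue directly with the pair $(r_1,c)$. Lemma~\ref{lemma:common_perp} says the geodesic containing $c$ and $r_1$ have the common perpendicular $x'$, so they are disjoint and not asymptotic. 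Therefore $X$ is hyperbolic, and its axis is $x'$, the unique geodesic orthogonal to both.

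We must also check that $X$ is a covering transformation. I would observe that $X=U^{-1}$ up to conjugation. More concretely: $\rho_{r_1}$ maps $x_1$ to itself reversing direction, and $\rho_c$ maps $x_2$ to $x_1$ reversing direction. So $X$ maps the directed segment $x_2$ onto the directed segment $x_1$, sending $R_2$ to $R_1$. As in the proof of Lemma~\ref{lemma:common_perp}, orientation-preserving isometries are determined by a directed segment, and some deck transformation maps $x_2$ onto $x_1$. Hence $X$ is that deck transformation; indeed $X=U^{-1}$. Its translation length is twice the distance $d(r_1,c)$. Since $x'$ meets $r_1$, $c$, $r_2$ orthogonally and $\rho_c$ swaps $r_1,r_2$ while fixing $x'$, the point $c\cap x'$ is the midpoint between $r_1\cap x'$ and $r_2\cap x'$. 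So the translation length equals $d(r_1,r_2)$.

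\textbf{Step 2: $x'$ projects to $x^\sharp$.} The deck transformation $X$ (equivalently $U$) corresponds to the free homotopy class of $x$. The reason is that it maps $R_2$ to $R_1$, and the segment from $R_1$ along $x_1$ through $P'$ and along $x_2$ to $R_2$ projects exactly to the loop $x$ traversed once, based at the projection of $R_1$. The axis of a hyperbolic deck transformation projects to the closed geodesic in the conjugacy class it represents, with one period of translation giving one traversal. Thus $x'$ is a full lift of $x^\sharp$, including multiplicity when $x$ is a proper power.

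\textbf{Main obstacle.} The step needing most care is matching direction and orientation conventions: whether $X$ or $X^{-1}$ represents $x$ with its given direction, and that the orientation of $x'$ from $r_1$ to $r_2$ agrees with it. I would settle this by tracking where $X$ sends the foot point $r_2\cap x'$: it goes to $\rho_{r_1}(c\cap x')$ side, and I would compare this with the direction of $x_1$ followed by $x_2$. The statement as given is insensitive to direction, so this is mostly bookkeeping.
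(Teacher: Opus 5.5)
Your argument is correct, and your Step~1 is the paper's: hyperbolicity from the common perpendicular, the identity $X=\rho_c\circ\rho_{r_2}=U^{-1}$ making $X$ a deck transformation, and translation length $2d(r_1,c)=d(r_1,r_2)$. Your midpoint argument ($\rho_c$ swaps $r_1\cap x'$ and $r_2\cap x'$) justifies the last equality, which the paper only asserts.

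You diverge in showing that $x'$ projects to $x^\sharp$. You apply to $U$ the standard covering-space fact that the axis of a deck transformation projects to the closed geodesic of the corresponding conjugacy class, one period per traversal. Here $U$ sends $R_1$ to $R_2$ and so represents $x$ based at $R$. The paper also records that $U$ is the deck transformation of $x$ (based at the vertex). It then proves the projection claim synthetically instead: it reflects the pentagon $\Pi$ with vertex $P'$ successively across $r_1$, $r_2$ and their images. The outer sides of the resulting chain of pentagons form a full lift of $x$ at bounded distance from $x'$.

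Your route is shorter and handles the multiplicity convention for proper powers automatically. The paper's route stays synthetic and exhibits the right-angled configuration that feeds Theorem~\ref{thm:teardrop_metric}: half of $\Pi$, cut along $c$, is a quadrilateral with three right angles and angle $\theta_P/2$.

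The direction issue you flag takes one line, and your sketch of it is slightly off. With $S_i=r_i\cap x'$ one has $X(S_2)=\rho_{r_1}(\rho_c(S_2))=\rho_{r_1}(S_1)=S_1$. So $X$ translates from $r_2$ toward $r_1$, and $U$ from $r_1$ toward $r_2$. Hence $x'$, oriented from $r_1$ to $r_2$, projects to $x^\sharp$ with the direction of $x$.
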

\begin{figure}[ht]
	\centering
	\includegraphics[width=0.4\textwidth]{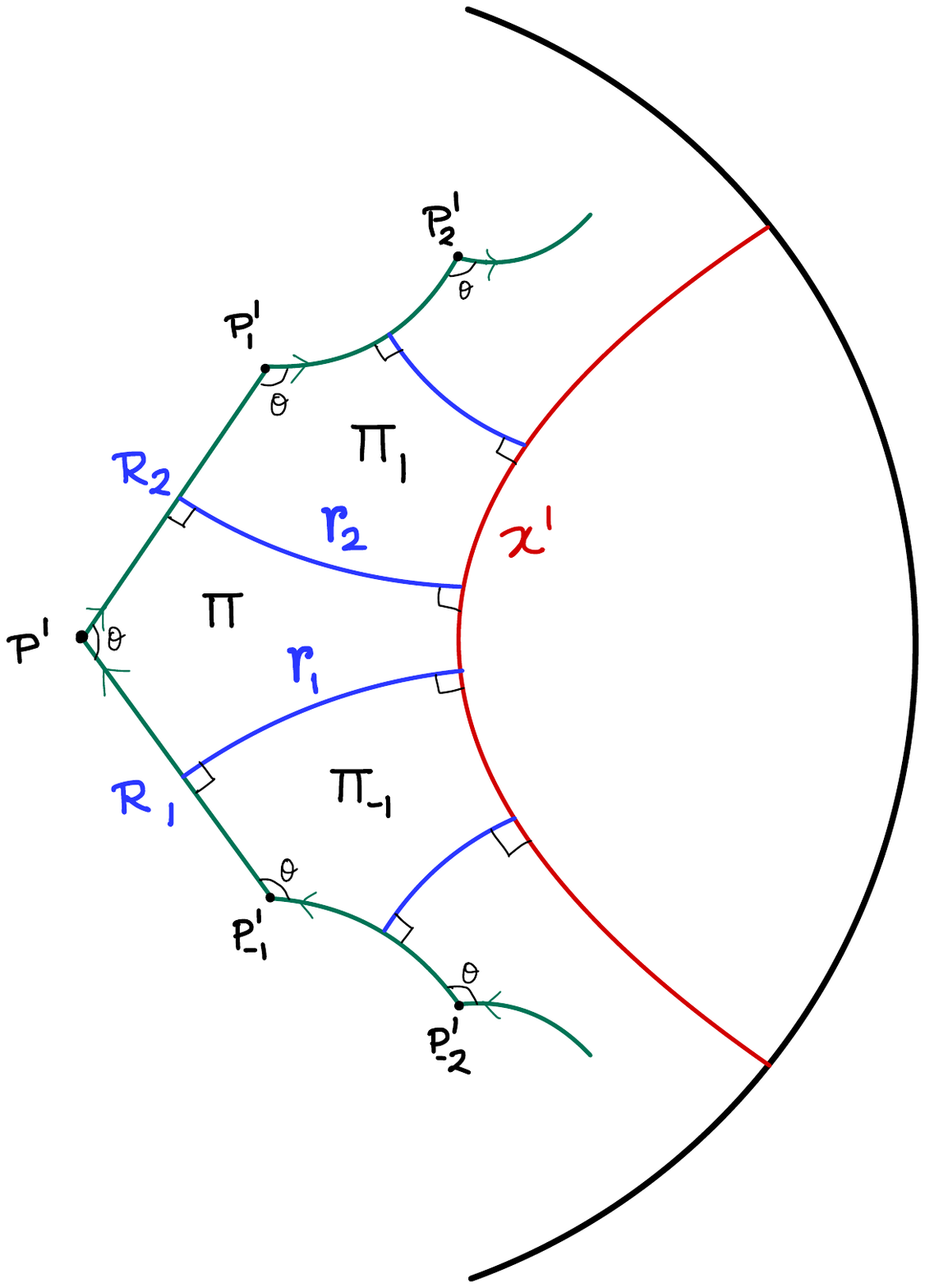}
\caption{Chain of pentagons.}
	\label{fig:teardroppent}
\end{figure}

\begin{proof}
Since $X=\rho_{r_1}\circ\rho_c$ is the composition of reflections in two distinct geodesics having the common perpendicular $x'$, it is an orientation-preserving hyperbolic isometry with axis $x'$.

Moreover, since $r_1=\rho_c(r_2)$, we have $\rho_{r_1}=\rho_c\circ\rho_{r_2}\circ\rho_c$, and therefore
\[
X=\rho_{r_1}\circ\rho_c=\rho_c\circ\rho_{r_2}=U^{-1},
\]
where $U=\rho_{r_2}\circ\rho_c$ is the covering transformation constructed in the proof of Lemma~\ref{lemma:common_perp}. In particular, $X$ is a covering transformation. Note also that $U$ maps $P'$ to the endpoint of the segment $x_2$, which is the endpoint of the lift of the loop $x$ starting at $P'$; hence $U$ is the deck transformation determined by the loop $x$ based at its vertex and the lift $P'$.

The translation length of $X$ is twice the distance between $r_1$ and $c$, which is also equal to the distance between $r_1$ and $r_2$. 

We claim that  $x'$ is at a bounded distance from a full lift of the teardrop geodesic $x$.  Indeed, consider the pentagon $\Pi$ with vertex $P'$, as shown in Figure~\ref{fig:teardroppent}. Observe that, except possibly for the angle at $P'$, all the angles in this pentagon are right angles.

Reflect the pentagon $\Pi$ across $r_1$ and across $r_2$ to obtain the pentagons $\Pi_{-1}$ and $\Pi_{1}$, respectively. By successively reflecting these pentagons across the appropriate lines (marked in blue in Figure~\ref{fig:teardroppent}), we construct a chain of pentagons, each with a side lying on the axis $A_X$. Also, observe that the other sides of the chain of pentagons (with appropriate direction) form a full lift of $x$.    This completes the proof of the claim.

Since $x'$ is a geodesic, the above claim  implies that $x'$ is a lift of the closed geodesic in the free homotopy class of $x$. 

Finally, note that the teardrop geodesic $x$ is the projection of the union of the segments $x_1$ and $x_2$ between the midpoints $R_1$ and $R_2$. Similarly, the geodesic in the free homotopy class of $x$ is the projection of the segment of $x'$ lying between the perpendiculars $r_1$ and $r_2$. 
\end{proof}

\begin{figure}[ht]
	\centering
	\includegraphics[width=0.4\textwidth]{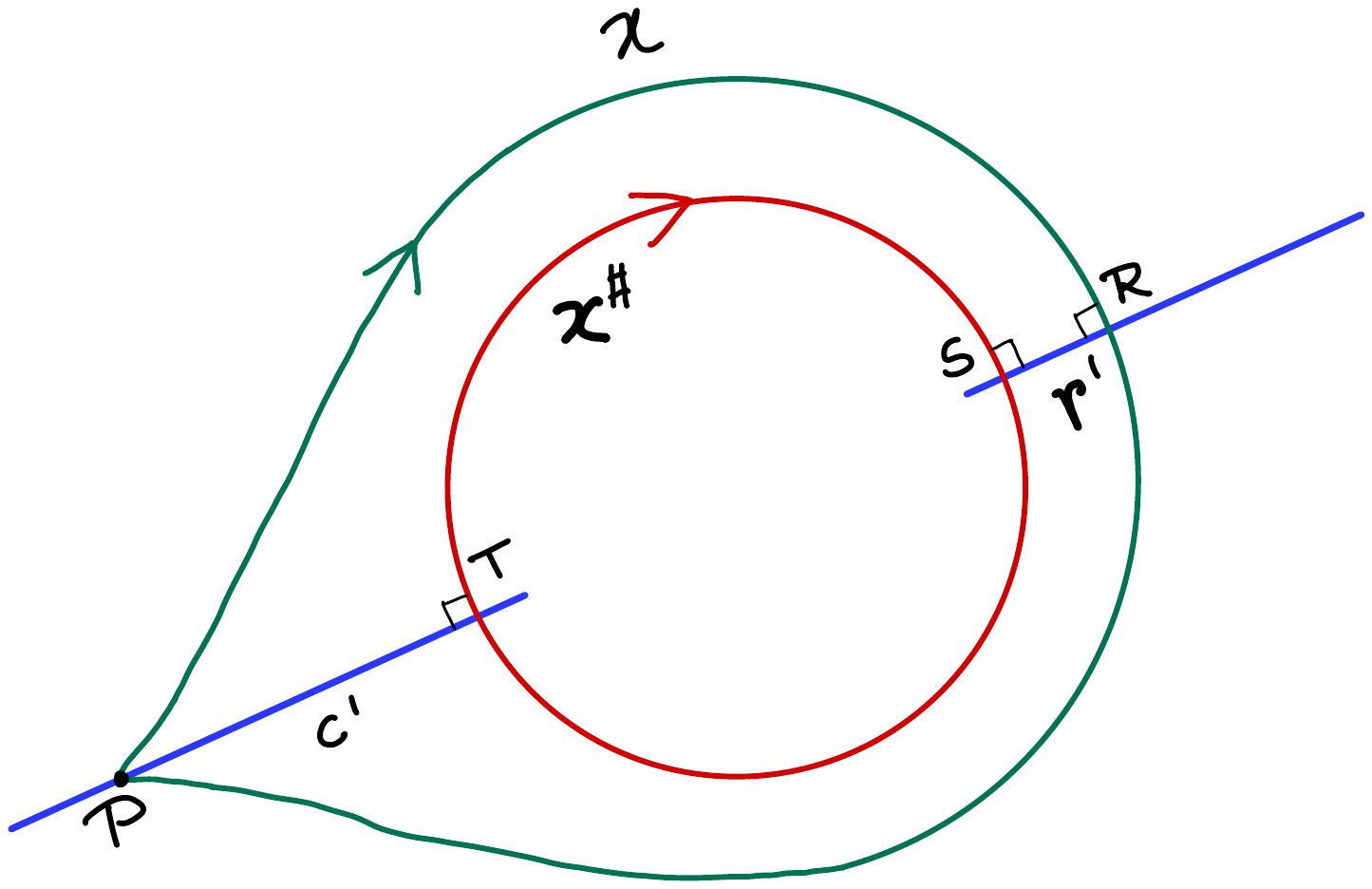}
\caption{The teardrop geodesic $x$ with vertex $P$, the geodesic $x^\sharp$ in its free homotopy class, and the perpendiculars $r'$ at $R$ and $c'$ at $P$ meeting $x^\sharp$ at $S$ and $T$.}
	\label{fig:teardroproj}
\end{figure}  

The following theorem describes a construction of the unique geodesic on the free homotopy class of a teardrop geodesic by considering the two lifts of the latter through the vertex and making a geometric construction of the universal cover that determines a lifting of the unique geodesic. 

Let $x$ be a teardrop geodesic on $\Sigma$ with vertex $P$. 
Denote by $R$ the point on $x$ at distance $\ell_x/2$ from $P$, measured along $x$. Denote by $r'$ the geodesic perpendicular to $x$ at $R$. Denote by $c'$ the geodesic that bisects $\theta_P$. See Figure~\ref{fig:teardroproj}.
Choose a lift $P'$ of $P$ to $\mathbb H$, and let $x_1$ and $x_2$ be the two directed lifts of $x$ through $P'$. For each $i$ in $\{1,2\}$, let $R_i$ be the midpoint of $x_i$, and let $r_i$ be the geodesic perpendicular to $x_i$ at $R_i$.  Let $c$ be the bisector of the angle from $x_1$  to $x_2$ at $P'$.

\begin{theorem}\label{thm:geodesic_representative_teardrop} With the notations above, we have.
\begin{enumerate}
\item  The common perpendicular $x'$ of $r_1$, $r_2$, is also perpendicular to  $c$ and projects to the unique closed geodesic $x^\sharp$ freely homotopic to $x$. Moreover, the length of $x^\sharp$ equals  the distance between $r_1$ and $r_2$.
\item The geodesic $r'$ meets $x^\sharp$ perpendicularly at a point $S$, and $c'$ meets $x^\sharp$ perpendicularly at a point $T$. The points $S$ and $T$ divide $x^\sharp$ into two arcs, each of length $\ell_{x^\sharp}/2$.(For proper powers, the arcs are understood along the parametrized geodesic, rather than as subsets of its image.)  See Figure~\ref{fig:teardroproj}.  
\end{enumerate}
\end{theorem}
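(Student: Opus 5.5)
Part (1) is essentially a repackaging of Lemma~\ref{lemma:common_perp} and Lemma~\ref{lem:geodesic}, and I would simply assemble it from them. Lemma~\ref{lemma:common_perp} gives a common perpendicular $x'$ to $r_1$, $r_2$ and the geodesic containing $c$. Since $r_1\neq r_2$ and neither meets nor is asymptotic to the other (they are related by the hyperbolic deck transformation $X^{-1}=U$ constructed there), this common perpendicular is unique, so it is the $x'$ of the statement. Lemma~\ref{lem:geodesic} then says that $x'=A_X$ is a full lift of $x^\sharp$, with $X$ the deck transformation of the loop $x$. The length of $x^\sharp$ (counted with multiplicity for powers) equals the translation length of $X$, which the lemma identifies with $d(r_1,r_2)$. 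The one point to check is that the translation length is the full length of $x^\sharp$ with multiplicity, and not the length of a primitive root. This holds because $X$ is the deck transformation representing exactly the class of $x$, and the projection of the segment of $x'$ between $r_1$ and $r_2$ is a fundamental domain for $\langle X\rangle$ acting on $x'$.

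For part (2) I would work in $\mathbb H$ and project. Let $S_i=x'\cap r_i$ and $T'=x'\cap c$. The reflection $\rho_c$ preserves $x'$, fixes $T'$, and swaps $r_1$ and $r_2$, so it swaps $S_1$ and $S_2$; hence $d(S_1,T')=d(T',S_2)=\ell_{x^\sharp}/2$. The geodesic $r_2$ passes through $R_2$, which projects to $R$, and is perpendicular there to $x_2$, which projects onto $x$. So $r_2$ projects to $r'$, and $r'$ meets $x^\sharp$ perpendicularly at the projection $S$ of $S_2$; the same holds for $S_1$, since $U$ maps $S_1$ to $S_2$. Similarly, $c$ passes through $P'$ and bisects the angle between the lifts, so it projects into $c'$. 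Therefore $c'$ meets $x^\sharp$ perpendicularly at $T$, the image of $T'$.

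The arc of $x'$ from $S_1$ to $S_2$ projects onto one full traversal of $x^\sharp$, starting and ending at $S$. The point $T'$ splits it into the two halves computed above, which gives the two arcs of length $\ell_{x^\sharp}/2$. For proper powers, one reads these as arcs of the parametrized geodesic, which is exactly what the projection of the segment $[S_1,S_2]$ provides.

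The main care point is the bookkeeping needed to check that the geodesic $c'$ on $\Sigma$ is really the projection of the full geodesic containing $c$, and that the relevant intersection point is $T'$ rather than a point on the opposite ray. I would handle this by noting that $x'$ meets the complete geodesic through $c$ at exactly one point, and $c'$ is defined as the complete geodesic bisecting at $P$. Identifying $r'$ with the projection of $r_2$ is immediate from the definitions.
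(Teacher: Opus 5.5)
Your proposal is correct and follows essentially the same route as the paper. Part (1) is read off from Lemma~\ref{lemma:common_perp} and Lemma~\ref{lem:geodesic}, and part (2) comes from projecting $S_i=r_i\cap x'$ and $T'=c\cap x'$. Your $\rho_c$-symmetry argument only makes explicit why $T'$ is the midpoint of $[S_1,S_2]$.

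One small fix concerns why $r_1$ and $r_2$ are ultraparallel, so that the common perpendicular is unique. This holds because both are perpendicular to $x'$ at distinct points. It does not follow merely from $U$ mapping one to the other, since a hyperbolic deck transformation can map a geodesic to one that crosses it.
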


\begin{proof}
   (1) follows directly from Lemma~\ref{lem:geodesic}, which  establishes that $x'$ is the axis of $X$, is perpendicular to  $r_1$, $r_2$, and $c$, and projects to $x^\sharp$, with $\ell_{x^\sharp}$ equal to the distance between $r_1$ and $r_2$.

For (2), for  $i\in\{1,2\}$, let $S_i$ denote the intersection of $r_i$ with $x'$,  and let $T_1$ denote the intersection of $c$ with $x'$. By Lemma~\ref{lem:geodesic}, $r_i$ meets $x'$  perpendicularly, so $S_i$ projects to a point $S$ on $x^\sharp$  where $r'$ meets $x^\sharp$ perpendicularly. Similarly, $T_1$  projects to a point $T$ on $x^\sharp$ where $c'$ meets $x^\sharp$ 
perpendicularly. Since $S_1$ and $T_1$ lie on $x'$ at distance $\ell_{x^\sharp}/2$ apart, their projections $S$ and $T$ divide $x^\sharp$ into two arcs, each of length $\ell_{x^\sharp}/2$.

\end{proof}

\begin{figure}[ht]
	\centering
	\includegraphics[width=0.4\textwidth]{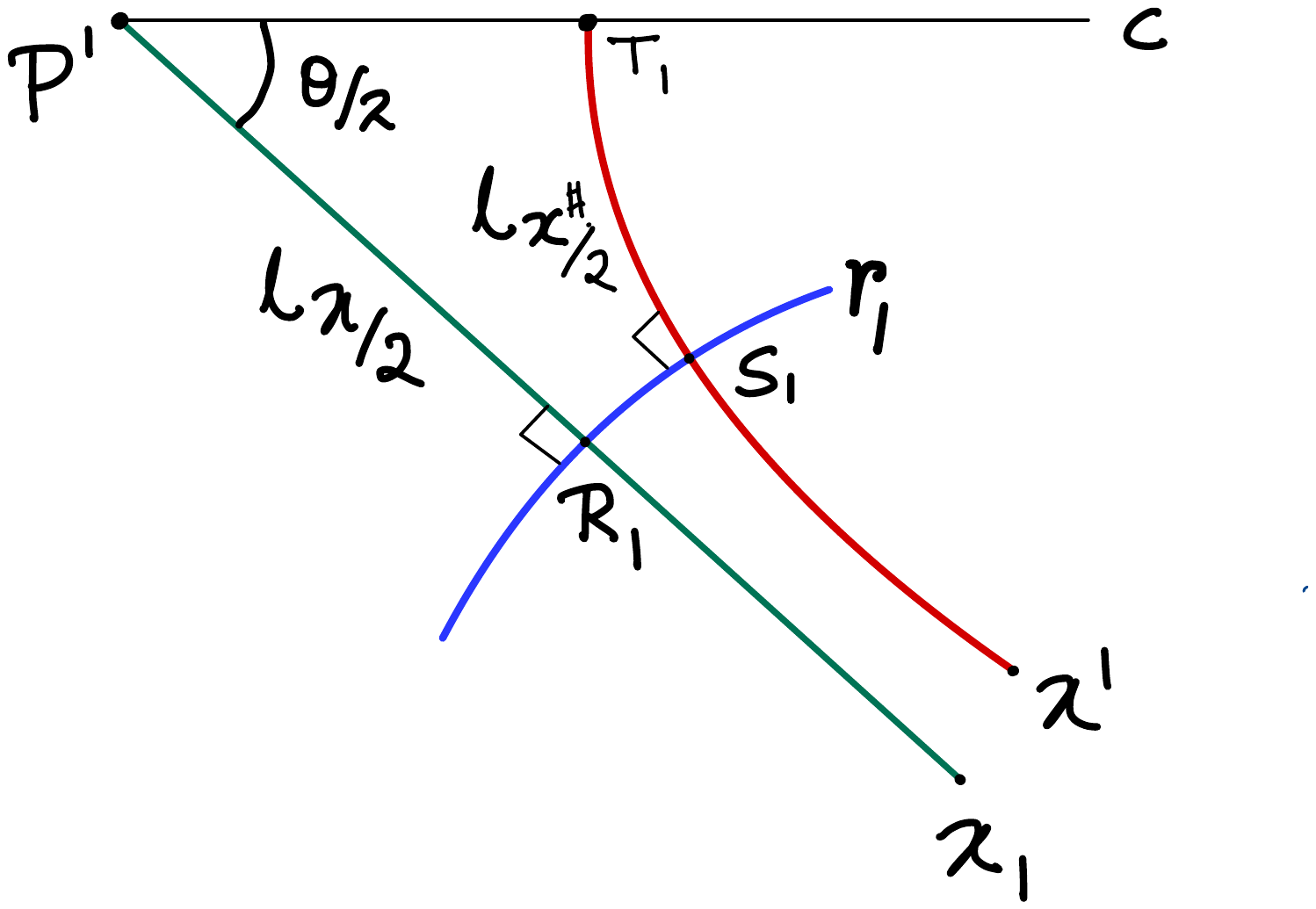}
\caption{Quadrilateral.}
	\label{fig:quadrilateral}
\end{figure}

Consider the quadrilateral of Figure~\ref{fig:quadrilateral}. By \cite[Theorem 7.17.1]{beardon2012geometry}, we have the following:

\begin{theorem}\label{thm:teardrop_metric}
Let $x$ be a teardrop geodesic on $\Sigma$ with vertex $P$. Then
\[
\sin\frac{\theta_P}{2}\cosh\frac{\ell_x}{2}
=
\cosh\frac{\ell_{x^\sharp}}{2}.
\]
In particular, $\sin\frac{\theta_P}{2}\cdot\cosh\frac{\ell_x}{2}
> 1$.
\end{theorem}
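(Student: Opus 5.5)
The plan is to work in the universal cover with the configuration of Lemma~\ref{lem:geodesic} and Theorem~\ref{thm:geodesic_representative_teardrop}, and to isolate a single quadrilateral with three right angles (a Lambert quadrilateral). Its vertices are $P'$, $R_2$, $S_2=r_2\cap x'$ and $T_1=c\cap x'$.
\begin{itemize}
\item The angle at $R_2$ is a right angle, since $r_2$ is perpendicular to $x_2$ there.
\item The angles at $S_2$ and $T_1$ are right angles, since $x'$ is perpendicular to both $r_2$ and $c$ (Lemma~\ref{lemma:common_perp} and Lemma~\ref{lem:geodesic}).
\item The angle at $P'$ is $\theta_P/2$, because $c$ bisects the angle between $x_1$ and $x_2$.
\end{itemize}
The sides are as follows. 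The side $P'R_2$ has length $\ell_x/2$, since $R_2$ is the midpoint of $x_2$. The side $T_1S_2$ lies on $x'$ and has length $\ell_{x^\sharp}/2$, by part (2) of Theorem~\ref{thm:geodesic_representative_teardrop}. So $T_1S_2$ is the side opposite to $P'R_2$, and neither side touches the acute vertex $P'$.

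Before applying a formula, I will check that the four points really bound a convex quadrilateral, with $P'$ and $x'$ on the same side of $r_2$, and so on. This follows from the reflection-symmetric pentagon picture in Figure~\ref{fig:teardroppent}. The pentagon $\Pi$ has vertices $P'$, $R_1$, $S_1$, $S_2$, $R_2$. The bisector $c$ cuts $\Pi$ into two mirror-image halves, and our quadrilateral is one of them. In particular $c$ crosses $x'$ between $S_1$ and $S_2$, at the point $T_1$.

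Next I apply the trigonometry of a Lambert quadrilateral, citing \cite[Theorem 7.17.1]{beardon2012geometry}. Label the quadrilateral with acute angle $\phi$ at one vertex. Let $a$ be one side adjacent to $\phi$, and let $b'$ be the side opposite to $a$, which is not adjacent to $\phi$. The relation needed is
\[
\cosh b' = \sin\phi\,\cosh a .
\]
To double-check this form, I compare it with the standard identity $\sin\phi=\cosh a\,\sinh b$ together with related identities. The most reliable check is a degeneration argument. As $b'\to 0$, the quadrilateral collapses to a right triangle with legs $a$ and $b'\to0$; then $\phi\to\pi/2$, consistent with $\sin\phi\cosh a\to 1\cdot\cosh a$ only if $a\to0$ as well. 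A cleaner route, if there is any doubt, is to verify the identity directly. Place the side $T_1S_2$ on the real axis of the hyperboloid model, write $P'$ and $R_2$ in Fermi coordinates $(t,s)$ relative to $x'$, and use the formula $\cosh d=\cosh s_1\cosh s_2\cosh(t_1-t_2)-\sinh s_1\sinh s_2$ for the perpendicular feet. With $\phi=\theta_P/2$, $a=\ell_x/2$ and $b'=\ell_{x^\sharp}/2$, this gives
\[
\sin\frac{\theta_P}{2}\cosh\frac{\ell_x}{2}=\cosh\frac{\ell_{x^\sharp}}{2}.
\]

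For the final inequality, note that $x^\sharp$ is a genuine closed geodesic with $\ell_{x^\sharp}>0$, because the deck transformation $X$ is hyperbolic by Lemma~\ref{lem:geodesic}. Hence $\cosh(\ell_{x^\sharp}/2)>1$, and the inequality follows.

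The main obstacle is matching the labeling in Beardon's formula for the quadrilateral to our configuration. The task is to confirm that the side of length $\ell_x/2$ is adjacent to the acute angle and that $\ell_{x^\sharp}/2$ is the opposite side. Getting this wrong gives identities such as $\sinh$ versus $\cosh$ variants. I would first settle it by the Fermi-coordinate computation described above.
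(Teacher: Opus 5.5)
Your proposal is correct and takes the paper's approach: the paper also uses the Lambert quadrilateral $P'R_2S_2T_1$, which is half of the pentagon $\Pi$, with angle $\theta_P/2$ at $P'$, adjacent side $\ell_x/2$ and opposite side $\ell_{x^\sharp}/2$ on $x'$, and reads off $\cosh(\ell_{x^\sharp}/2)=\sin(\theta_P/2)\cosh(\ell_x/2)$ from \cite[Theorem 7.17.1]{beardon2012geometry}; your matching of sides to the acute angle is the right one. Your degeneration ``check'' tests nothing, since $b'\to0$ forces $a\to0$ as well, but it is not needed, and the final inequality follows exactly as you say from $\ell_{x^\sharp}>0$.
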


\begin{remark}
By Theorem~\ref{thm:teardrop_metric}, if $\theta_P$ is very
small, then the teardrop geodesic must be very long. In other words, the vertex angle measures the  ``loss'' of length from the teardrop to its geodesic representative.
\end{remark}

\begin{remark}
    The closed geodesic $x^\sharp$ is perpendicular to the geodesics $c'$ (the angle bisector at $P$) and $r'$ (the perpendicular through the ``midpoint'' $R$).  It would be interesting to determine sufficient conditions for $x^\sharp$ to be the shortest geodesic with that property. 
\end{remark}

   \begin{figure}[ht]
	\centering
	\includegraphics[trim={0 140 0 30},  width=.7\textwidth]{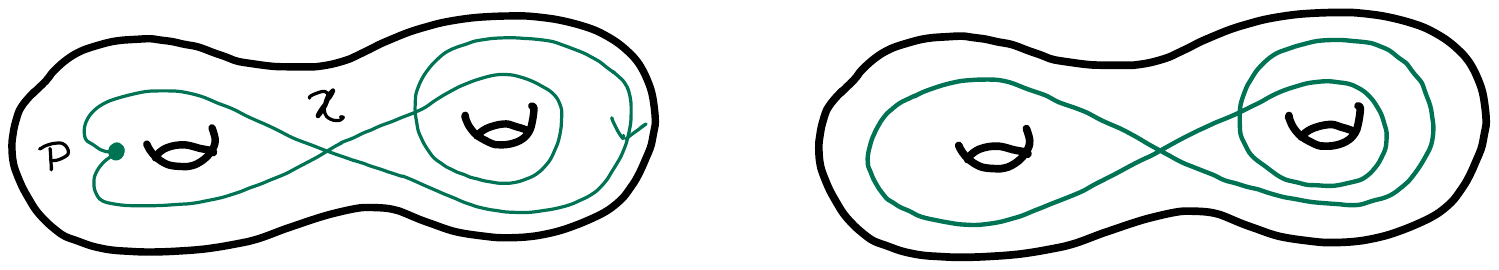}
\caption{A self-intersection teardrop geodesic and the unique geodesic in its free homotopy class.}
	\label{fig:teardropself}
\end{figure}

\section{Geometric definition of Turaev's Cobracket}\label{sec:definition}

In this section, we use the previous construction to define Turaev's cobracket in purely geometric terms.
Assume now that the orientable surface $\Sigma$, equipped with a complete metric of constant curvature $-1$ and geodesic boundary, has a preferred orientation.

Consider a closed directed geodesic $x$ on $\Sigma$, and a point $P$ on $x$ where $x$ passes more than once. We focus on the case where $x$ passes through $P$ exactly twice. (If $x$ passes through $P,$ $k > 2$ times, one must consider the $k$ small segments of $x$ at $P$ and apply the construction to each of the $\binom{k}{2}$ pairs.)

Write $x$ as the concatenation at $P$ of two directed piecewise geodesic loops, $a_P$ and $b_P$. By possibly relabeling, we may assume that the orientation of $\Sigma$ at $P$ agrees with that induced by the ordered pair (tangent vector of the $a_P$ branch at $P$, tangent vector of the $b_P$ branch at $P$). See Figure \ref{fig:splitting}.

\begin{figure}[ht]
	\centering
	\includegraphics[trim={0 80 0 50},  width=.7 \textwidth]{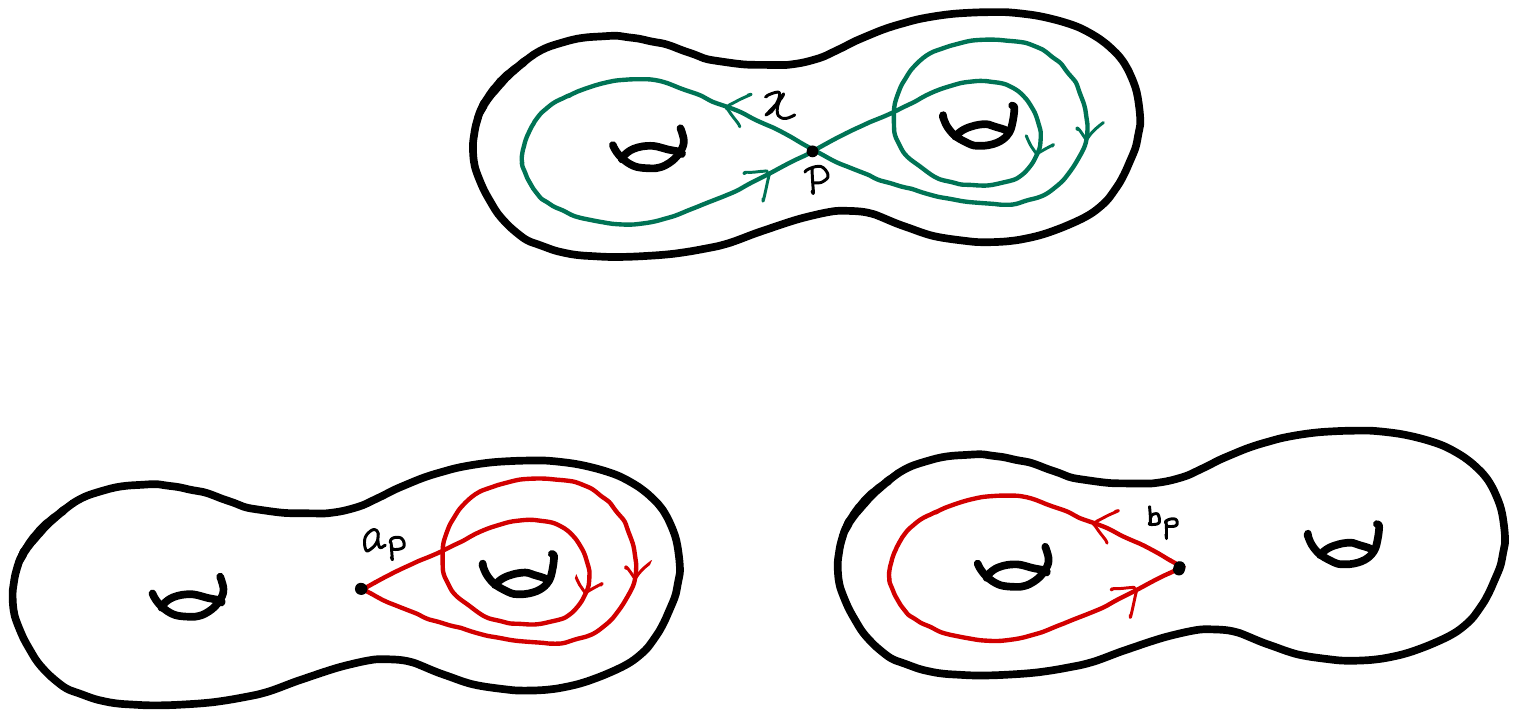}
\caption{Splitting of a geodesic into two teardrop geodesic at a self-intersection point.}
	\label{fig:splitting}
\end{figure}

Observe that $a_P$ and $b_P$ are teardrop geodesics with vertices at $P$.  The vertex angles of $a_P$ and $b_P$ at $P$ are vertical angles, hence both equal to $\theta_P$, and the two angles are bisected by the same complete geodesic; we denote by $c$ (a ray of) this common bisector.

Choose a point $P'$ in the universal cover $\mathbb{H}$, which is a lift of $P$. Consider the lifts of $a_P$ and $b_P$ passing through $P'$ as shown in Figure~\ref{fig:cobracketgeom}. Observe that Figure~\ref{fig:cobracketgeom} is the concatenation of two copies of Figure~\ref{fig:teardropperp} at $P'$.

\begin{figure}[ht]
	\centering
	\includegraphics[width=0.5\textwidth]{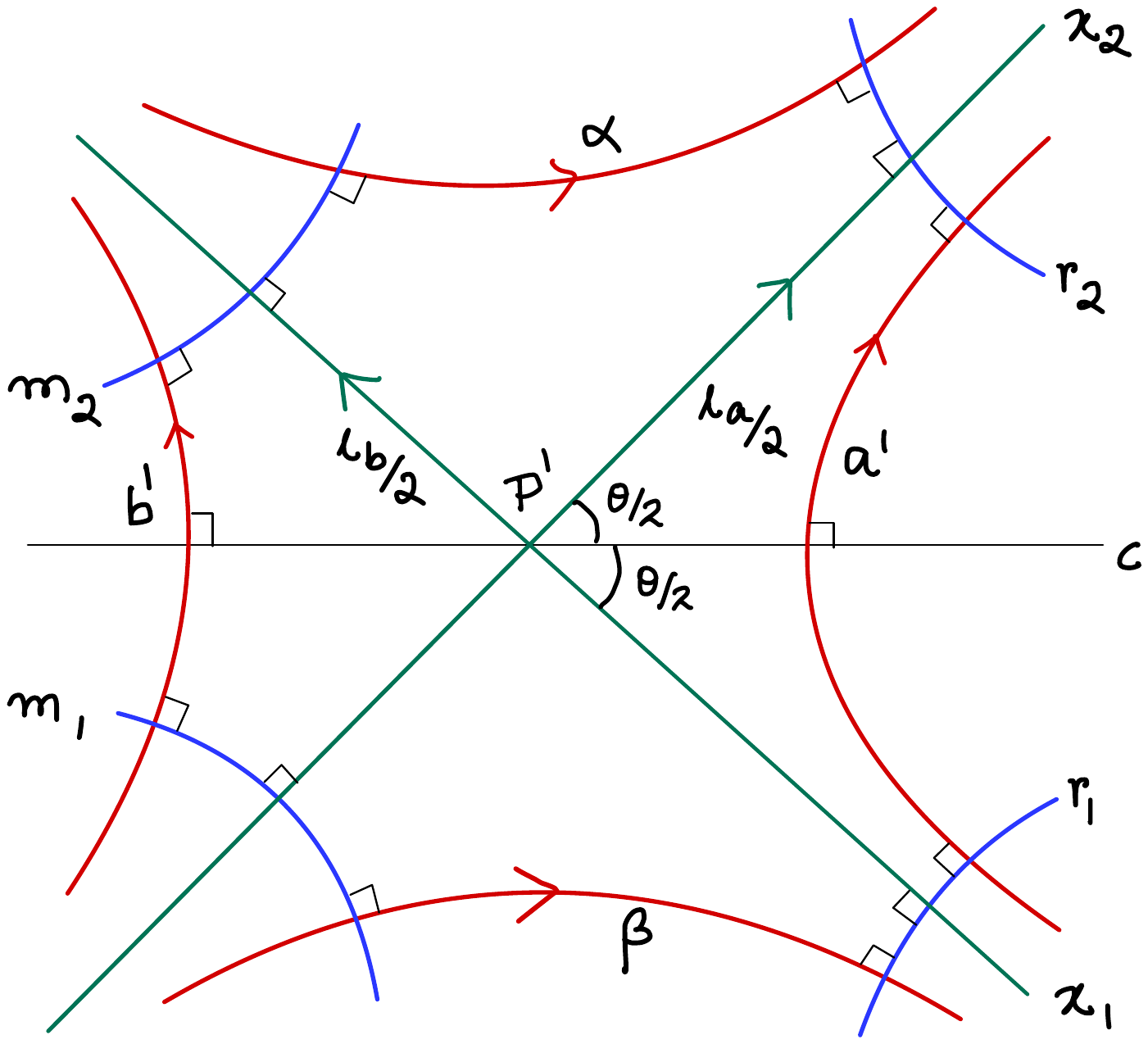}
\caption{Lift around a point $P.$ }
	\label{fig:cobracketgeom}
\end{figure}

We use the notations in Figure~\ref{fig:cobracketgeom}. Observe that the compositions $\rho_{r_2} \circ \rho_{m_1}$ and $\rho_{r_1} \circ \rho_{m_2}$ are covering transformations. Moreover, the segment $x_1$ is contained in the axis of $\rho_{r_1} \circ \rho_{m_2}$, and the segment $x_2$ is contained in the axis of $\rho_{r_2} \circ \rho_{m_1}$; see Figure~\ref{fig:cobracketgeom}. Consider the isometries $A$ and $B$ defined by
\[
A = \rho_{c} \circ \rho_{r_1} = \rho_{r_2} \circ \rho_{c},
\qquad
B = \rho_{c} \circ \rho_{m_1} = \rho_{m_2} \circ \rho_{c}.
\]
Applying the proof of Lemma~\ref{lemma:common_perp} to the teardrop geodesic $a_P$, we see that the isometry $A=\rho_c\circ\rho_{r_1}=\rho_{r_2}\circ\rho_c$ is precisely the covering transformation mapping the directed lift $x_1$ onto the directed lift $x_2$; similarly, $B$ is the covering transformation associated with the teardrop geodesic $b_P$. By Lemma~\ref{lem:geodesic}, their axes $a'$ and $b'$ project to the unique geodesics in the free homotopy classes of $a_P$ and $b_P$, respectively.

\begin{remark} Observe that the isometries 
\[ \rho_{r_2} \circ \rho_{m_2} = \rho_{r_2} \circ \rho_{c} \circ \rho_{c} \circ \rho_{m_2} = A B^{-1} \] 
and 
\[ \rho_{r_1} \circ \rho_{m_1} = \rho_{r_1} \circ \rho_{c} \circ \rho_{c} \circ \rho_{m_1} = A^{-1} B, \] 
are also covering transformations with axes $\alpha$ and $\beta$, respectively (see Figure~\ref{fig:cobracketgeom}). $\alpha$ and $a'$ share the perpendicular $r_2$, hence are disjoint.
\end{remark}

Denote by $V$ the vector space generated by $S$, the set of closed, directed, non-constant geodesics on $\Sigma$.   Consider an element $a \in S,$ and a transversal self-intersection point $P$ of $a$. If the multiplicity of $P$ is larger than $2,$ choose a pair of transversal intersecting branches. (For simplicity, we denote all these possibilities by $P \in \cap a$.)
For element of $\cap a$ , cutting $a$ at the two branches produces two directed teardrop geodesics. We denote them by $a_1^P$ and $a_2^P$, ordered so that the orientation determined by the branch of $a_1^P$ leaving $P$, followed by the branch of $a_2^P$ leaving $P$, agrees with the orientation of $\Sigma$.  Their geodesic representatives are then $(a_1^P)^\sharp$ and $(a_2^P)^\sharp$.

Define a map $\delta' \colon S \to V \otimes V$ by \[ \delta'(a) = \sum_{P \in \cap a} \left( \left(a_1^P\right)^\sharp \otimes \left(a_2^P\right)^\sharp - \left(a_2^P\right)^\sharp \otimes \left(a_1^P\right)^\sharp \right), \] where $\cap a$ denotes the set of all transverse pairs of branches of $a$. If  $a$ is regarded as a parametrized curve $a:S^1\to\Sigma$, an element of $\cap a$ is a pair of distinct points $s,t\in S^1$ such that $a(s)=a(t)$ with transverse tangent vectors $a'(s)$ and $a'(t)$ . Extend $\delta'$ to $V$ by linearity.

\begin{remark}\label{rem:power-cancellation}
If $a=b^k$, where $a$ and $b$ are closed geodesics, $b$ is not a proper power of another geodesic, and $k\geq 2$, then the minimal self-intersection number of $a$ is $k^2$ times the minimal self-intersection number of $b$ plus $(k-1)$, that is, $k^2 \mathrm{si}(b)+k-1$, where  $\mathrm{si}(b)$ denotes the minimal self-intersection number of $b$. 
By considering a small perturbation of $a$, we see that, after applying $f\otimes f$, the sum of the $(k-1)$ terms in the usual cobracket $\delta$ corresponding to the ``power'' points is
  \[
    \begin{aligned}
    &\left(b\right)^\sharp \otimes \left(b^{k-1}\right)^\sharp - \left(b^{k-1}\right)^\sharp \otimes \left(b\right)^\sharp \\
    +\ &\left(b^2\right)^\sharp \otimes \left(b^{k-2}\right)^\sharp - \left(b^{k-2}\right)^\sharp \otimes \left(b^2\right)^\sharp \\
    +\ &\dots \\
    +\ &\left(b^{k-1}\right)^\sharp \otimes \left(b\right)^\sharp - \left(b\right)^\sharp \otimes \left(b^{k-1}\right)^\sharp
    \ =\ 0.
    \end{aligned}
    \]
    Then, the $(k-1)$ intersection points coming from the curve wrapping around itself can be disregarded when computing the cobracket. 
\end{remark}

We follow the notations of  Appendix~\ref{app:Lie}. The unique-geodesic-representative  property of hyperbolic surfaces induces an isomorphism 
$f\colon W'\to V$.

\begin{theorem}\label{thm:isomorphism}
The isomorphism $f\colon W'\to V$ is an isomorphism of Lie  coalgebras between Turaev's coalgebra $(W',\delta)$ defined  in Appendix~\ref{app:Lie} and $(V,\delta')$. In symbols,
\[
\delta'\circ f=(f\otimes f)\circ\delta.
\]
\end{theorem}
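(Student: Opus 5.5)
The plan is to prove the identity on generators and then extend by linearity. Since $f$ is a linear isomorphism, it suffices to check $\delta'(f(\langle\alpha\rangle))=(f\otimes f)(\delta(\langle\alpha\rangle))$ for each free homotopy class $\langle\alpha\rangle$ of a non-contractible closed curve.

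The key input is Turaev's invariance result. $\delta(\langle\alpha\rangle)$ may be computed from any representative of $\langle\alpha\rangle$ whose self-intersections are transverse double points. I would choose the representative to be the closed geodesic $x=f(\langle\alpha\rangle)=\alpha^\sharp$ itself.

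The first case is when $x$ is not a proper power.
\begin{itemize}
\item Its self-intersections are automatically transverse: two geodesic branches through a point with the same tangent line coincide.
\item If $x$ passes through some point more than twice, I would perturb slightly to a general-position curve. Each $k$-fold point splits into $\binom{k}{2}$ double points, each corresponding to a pair of transverse branches. The loops cut out at each new double point are homotopic to the loops obtained by cutting $x$ at the corresponding pair of branches, and the orientation order of the two tangent vectors is unchanged by a small perturbation. This matches precisely the indexing set $\cap x$ used in $\delta'$.
\item For each $P\in\cap x$, the loops $\alpha_1^P,\alpha_2^P$ of Turaev's definition are exactly the teardrop geodesics $x_1^P,x_2^P$, with the same ordering convention. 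Hence $f(\langle\alpha_i^P\rangle_0)=(x_i^P)^\sharp$.
\item Contractible terms do not occur: a teardrop geodesic is never null-homotopic, because by Lemma~\ref{lem:geodesic} its deck transformation $U$ is hyperbolic. (Equivalently, $\theta_P\in(0,\pi)$ and the hyperbolic plane has no geodesic bigons.) So the $\langle\cdot\rangle_0$ convention never produces $0$.
\end{itemize}
Summing over $P$ then gives the identity termwise.

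The main obstacle is the proper power case $x=b^k$ with $k\ge 2$. Here the geodesic is not in general position, so Turaev's formula cannot be applied to $x$ directly; it must be applied to a perturbation $\tilde x$ of $b^k$. I would perturb $b^k$ to wind $k$ times around a thin annular neighbourhood of $b$. This yields two kinds of double points:
\begin{itemize}
\item $k^2\cdot\mathrm{si}(b)$ points near the self-intersections of $b$, namely one for each pair of sheets at each crossing of $b$;
\item $k-1$ ``power'' points.
\end{itemize}
By Remark~\ref{rem:power-cancellation}, the contributions of the $k-1$ power points sum to zero after applying $f\otimes f$.

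It then remains to match each of the $k^2$ perturbed crossings near a self-intersection $Q$ of $b$ with a transverse pair of parameters $(s,t)$ of $x:S^1\to\Sigma$ lying over $Q$. The parametrization of $b^k$ visits $Q$ $2k$ times, giving $k^2$ transverse pairs, and these correspond to the $k^2$ perturbed crossings. For each such pair, cutting $\tilde x$ at the perturbed crossing and cutting $x$ at $(s,t)$ produce homotopic loops, again with the same orientation order. So their terms agree under $f\otimes f$.

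I expect this bookkeeping — the explicit correspondence between sheet pairs of the perturbation and parameter pairs in $\cap x$, together with checking the orientation sign — to be the delicate step. I would handle it by working in the annulus cover corresponding to $b$, where the perturbed curve can be drawn explicitly.
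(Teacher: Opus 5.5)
Your proposal is correct and follows the paper's route. Both compute Turaev's $\delta$ on the geodesic representative, identify the cut loops with the teardrop geodesics $x_i^P$, and discard the $k-1$ power points of a proper power via Remark~\ref{rem:power-cancellation}. You also make explicit what the paper leaves implicit: non-contractibility of teardrops, resolution of points of multiplicity greater than two, and the matching of the $k^2\,\mathrm{si}(b)$ perturbed crossings with the transverse parameter pairs in $\cap x$.
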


\begin{proof}
It is enough to verify the identity on the elements of the natural basis of $W'$. Let $x$ be a closed directed geodesic. For each $P\in\cap x$, let $x_1^P$ and $x_2^P$ be the two directed teardrop geodesics obtained by cutting $x$ at $P$, ordered as in the
definitions of $\delta$ and $\delta'$. By definition of $f$,
\[
f\left(\langle x_1^P\rangle_0\right)
=
\left(x_1^P\right)^\sharp
\qquad\text{and}\qquad
f\left(\langle x_2^P\rangle_0\right)
=
\left(x_2^P\right)^\sharp.
\]
If $x$ is a proper power, Remark~\ref{rem:power-cancellation} shows that the additional intersection points introduced by a generic perturbation have total contribution zero. Hence, in all cases, the cobracket may be computed using only the transverse pairs of branches.

Therefore,
\[
\begin{aligned}
(f\otimes f)\bigl(\delta(\langle x\rangle)\bigr)
&=
\sum_{P\in\cap x}
\left(
\left(x_1^P\right)^\sharp\otimes
\left(x_2^P\right)^\sharp
-
\left(x_2^P\right)^\sharp\otimes
\left(x_1^P\right)^\sharp
\right)\\
&=
\delta'(x)
=
\delta'\bigl(f(\langle x\rangle)\bigr).
\end{aligned}
\]
The ordering conventions agree by construction. Hence
\[
\delta'\circ f=(f\otimes f)\circ\delta.
\]
\end{proof}

\begin{remark}
Let $P$ be a self-intersection point of the closed geodesic $x$, and let $x_1^P$ and $x_2^P$ be the two teardrop geodesics determined by $P$. Then

\[
\sin\left(\frac{\theta_P}{2}\right)
=
\frac{
\cosh\left(\frac{\ell_{(x_1^P)^\sharp}}{2}\right)
}{
\cosh\left(\frac{\ell_{x_1^P}}{2}\right)
}
=
\frac{
\cosh\left(\frac{\ell_{(x_2^P)^\sharp}}{2}\right)
}{
\cosh\left(\frac{\ell_{x_2^P}}{2}\right)
}.
\]

Thus the two teardrops determined by $P$ have the same normalized geodesic-to-teardrop ratio: the ratio of the hyperbolic cosine of the length of the geodesic representative to the hyperbolic cosine of the length of the teardrop is exactly $\sin(\theta_P/2)$.
\end{remark}

\section{Application: Cancellation pairs and rigidity}

We now derive a metric rigidity consequence of the cobracket construction. Let $x$ be a closed directed geodesic on $\Sigma$, and let $P$ be a self-intersection point of $x$. Cutting $x$ at $P$ produces the two directed teardrop geodesics $x_1^P$ and $x_2^P$, ordered as above. Their associated angles are both $\theta_P$. Their lengths satisfy
\[
\ell_{x_1^P}+\ell_{x_2^P}=\ell_x.
\]

For a metric $Y$ in the Teichmüller space of $\Sigma$, we use $\ell_{x_1^P}(Y)$, $\ell_{x_2^P}(Y)$, and $\theta_P(Y)$ for the corresponding lengths and angle determined by the $Y$-geodesic representative of $x$.

Let $x$ be a non-simple, non proper power, free homotopy class on $\Sigma$. 

For every complete hyperbolic metric $Y$ on $\Sigma$  the $Y$-geodesic representative of $x$ realizes the minimal self-intersection number of $x$ \cite{hass1985intersections}, and its transverse pairs of branches can be canonically identified as $Y$ varies.  In the next proposition,  $P$ and $Q$ are two of such transverse pairs of branches, and by $x_1^P$, $x_2^P$, $x_1^Q$, $x_2^Q$ denote the associated teardrop geodesics, ordered as in Section 3. The free homotopy classes of these teardrop loops are determined by the splitting of $x$ at $P$ and $Q$ and do not depend on the metric; only their lengths and angles do. We write $\ell_{x_1^P}(Y)$, $\theta_P(Y)$, and so on, for the corresponding lengths and angles.

\begin{proposition}\label{prop:cancellation_rigidity}
Let $x$ be a  free homotopy class on $\Sigma$, and let $P$ and $Q$ be two transverse pairs of branches. Suppose that the terms at $P$ and $Q$ cancel in $\delta'(x)$. Then, for every complete hyperbolic metric $Y$ on $\Sigma$ with no cusps and with possibly empty geodesic boundary,
\begin{enumerate}
    \item $\ell_{x_1^P}(Y)=\ell_{x_2^Q}(Y);$
    \item $ \theta_P(Y)=\theta_Q(Y).$ 
\end{enumerate}
\end{proposition}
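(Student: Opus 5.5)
Cancellation of the terms at $P$ and $Q$ in $\delta'(x)$ means that the term $(x_1^P)^\sharp\otimes(x_2^P)^\sharp$ is matched with $-(x_2^Q)^\sharp\otimes(x_1^Q)^\sharp$. Equivalently, as free homotopy classes, $\langle x_1^P\rangle=\langle x_2^Q\rangle$ and $\langle x_2^P\rangle=\langle x_1^Q\rangle$. These are statements about homotopy classes, so they do not depend on the metric. Hence, for every metric $Y$, the $Y$-geodesic representatives coincide: $(x_1^P)^\sharp=(x_2^Q)^\sharp$ and $(x_2^P)^\sharp=(x_1^Q)^\sharp$, with equal lengths. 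The plan is to feed these length equalities into Theorem~\ref{thm:teardrop_metric}, applied to all four teardrops. We use two pieces of extra information: the two teardrops at a point share the same vertex angle, and their lengths add up to $\ell_x(Y)$.

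Fix $Y$ and put $L=\ell_x(Y)$, $s=\ell_{x_1^P}(Y)$, $t=\ell_{x_2^Q}(Y)$. Then $\ell_{x_2^P}=L-s$ and $\ell_{x_1^Q}=L-t$. Write $g_1=\ell_{(x_1^P)^\sharp}=\ell_{(x_2^Q)^\sharp}$ and $g_2=\ell_{(x_2^P)^\sharp}=\ell_{(x_1^Q)^\sharp}$. Theorem~\ref{thm:teardrop_metric}, together with the remark following Theorem~\ref{thm:isomorphism}, gives
\[
\sin\tfrac{\theta_P}{2}=\frac{\cosh(g_1/2)}{\cosh(s/2)}=\frac{\cosh(g_2/2)}{\cosh((L-s)/2)},\qquad
\sin\tfrac{\theta_Q}{2}=\frac{\cosh(g_1/2)}{\cosh(t/2)}=\frac{\cosh(g_2/2)}{\cosh((L-t)/2)}.
\]
Eliminating the angles yields
\[
\frac{\cosh(s/2)}{\cosh((L-s)/2)}=\frac{\cosh(g_1/2)}{\cosh(g_2/2)}=\frac{\cosh(t/2)}{\cosh((L-t)/2)}.
\]

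The key step is to show that $h(u)=\cosh(u/2)/\cosh((L-u)/2)$ is strictly increasing on $(0,L)$. Its logarithmic derivative is $\tfrac12\bigl(\tanh(u/2)+\tanh((L-u)/2)\bigr)>0$, since both arguments are positive. Injectivity of $h$ then gives $s=t$, which is (1). Substituting back, $\sin(\theta_P/2)=\cosh(g_1/2)/\cosh(s/2)=\sin(\theta_Q/2)$. Since $\theta_P,\theta_Q\in(0,\pi)$, the half-angles lie in $(0,\pi/2)$, where the sine is injective, so $\theta_P=\theta_Q$, which is (2). As a byproduct, $\ell_{x_2^P}=\ell_{x_1^Q}$.

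The main obstacle I expect is the first step: making precise what "the terms cancel" means. I would argue as follows. Within a term, the two tensor factors $(x_1^P)^\sharp$ and $(x_2^P)^\sharp$ are distinct, since their lengths and the geometry differ unless both are the same class. For a nonzero term $u\otimes v-v\otimes u$ with $u\ne v$ to cancel against $u'\otimes v'-v'\otimes u'$, we need $(u',v')=(v,u)$. We must also exclude the degenerate case in which a term is itself zero ($u=v$), or in which a contractible factor gives $0$; these are not genuine cancelling pairs. Finally, I would observe that we only need equality of the homotopy classes, not of the teardrop curves, and that this identification persists for all $Y$ through the canonical identification of branches.
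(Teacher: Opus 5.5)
Your proof is correct and is essentially the paper's argument: you apply Theorem~\ref{thm:teardrop_metric} to the four teardrops, use the cancellation to equate the geodesic lengths, reduce to $h(\ell_{x_1^P})=h(\ell_{x_2^Q})$ for the strictly increasing function $h(u)=\cosh(u/2)/\cosh((\ell_x-u)/2)$, and then read off $\sin(\theta_P/2)=\sin(\theta_Q/2)$. The differences are only cosmetic: you eliminate the angle at each vertex before comparing $P$ and $Q$, whereas the paper first combines across $P$ and $Q$ and then divides; and your log-derivative check of $h'>0$ and your discussion of what cancellation means spell out what the paper calls a ``direct computation'' or leaves implicit.
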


\begin{proof}
Fix the metric $Y$ and suppress the dependence on $Y$ throughout.
The cancellation hypothesis gives
\begin{align}
\ell_{\left(x_1^P\right)^\sharp}
&=
\ell_{\left(x_2^Q\right)^\sharp},
\label{eq:cancel_1P_2Q}\\
\ell_{\left(x_2^P\right)^\sharp}
&=
\ell_{\left(x_1^Q\right)^\sharp}.
\label{eq:cancel_2P_1Q}
\end{align}
Applying Theorem~\ref{thm:teardrop_metric} to the four teardrop geodesics gives
\begin{align}
\sin\frac{\theta_P}{2}
\cosh\frac{\ell_{x_1^P}}{2}
&=
\cosh\frac{\ell_{\left(x_1^P\right)^\sharp}}{2},
\label{eq:T_1P}\\
\sin\frac{\theta_P}{2}
\cosh\frac{\ell_{x_2^P}}{2}
&=
\cosh\frac{\ell_{\left(x_2^P\right)^\sharp}}{2},
\label{eq:T_2P}\\
\sin\frac{\theta_Q}{2}
\cosh\frac{\ell_{x_1^Q}}{2}
&=
\cosh\frac{\ell_{\left(x_1^Q\right)^\sharp}}{2},
\label{eq:T_1Q}\\
\sin\frac{\theta_Q}{2}
\cosh\frac{\ell_{x_2^Q}}{2}
&=
\cosh\frac{\ell_{\left(x_2^Q\right)^\sharp}}{2}.
\label{eq:T_2Q}
\end{align}
Combining \eqref{eq:cancel_1P_2Q}, \eqref{eq:T_1P}, and \eqref{eq:T_2Q}, and using
$\ell_{x_2^Q}=\ell_x-\ell_{x_1^Q},$ we obtain
\begin{equation}\label{eq:combined_A}
\sin\frac{\theta_P}{2}
\cosh\frac{\ell_{x_1^P}}{2}
=
\sin\frac{\theta_Q}{2}
\cosh\frac{\ell_x-\ell_{x_1^Q}}{2}.
\end{equation}
Similarly, combining \eqref{eq:cancel_2P_1Q}, \eqref{eq:T_2P}, and \eqref{eq:T_1Q}, and using $\ell_{x_2^P}=\ell_x-\ell_{x_1^P},$ gives
\begin{equation}\label{eq:combined_B}
\sin\frac{\theta_P}{2}
\cosh\frac{\ell_x-\ell_{x_1^P}}{2}
=
\sin\frac{\theta_Q}{2}
\cosh\frac{\ell_{x_1^Q}}{2}.
\end{equation}

Dividing \eqref{eq:combined_A} by \eqref{eq:combined_B} gives
\[
\frac{\cosh(\ell_{x_1^P}/2)}
{\cosh((\ell_x-\ell_{x_1^P})/2)}
=
\frac{\cosh((\ell_x-\ell_{x_1^Q})/2)}
{\cosh(\ell_{x_1^Q}/2)}.
\]
Define
\[
h(s)=
\frac{\cosh(s/2)}
{\cosh((\ell_x-s)/2)},
\qquad 0<s<\ell_x.
\]
A direct computation gives $h'(s)>0.$ Thus $h$ is strictly increasing and therefore injective. The preceding
equality is
\[
h\left(\ell_{x_1^P}\right)
=
h\left(\ell_x-\ell_{x_1^Q}\right),
\]
and hence
\[
\ell_{x_1^P}
=
\ell_x-\ell_{x_1^Q}.
\]
This proves
\[
\ell_{x_1^P}+\ell_{x_1^Q}=\ell_x.
\]

Substituting this equality into \eqref{eq:combined_A} gives
\[
\sin\frac{\theta_P}{2}
=
\sin\frac{\theta_Q}{2}.
\]
Since $\theta_P,\theta_Q\in(0,\pi)$, it follows that
\[
\theta_P=\theta_Q.
\]
Since $Y$ was arbitrary, the result holds for any $Y$  in the Teichmüller space of $\Sigma$
\end{proof}

\appendix

\section{Turaev cobracket}\label{app:Lie}

A \textit{Lie coalgebra} is a vector space $V$ equipped with a linear map $\delta: V \to V \otimes V$ satisfying:
\begin{itemize}
  \item (Co-skew symmetry) $\sigma \circ \delta = -\delta$.
  \item (Co-Jacobi identity) $(\tau^2 + \tau + 1) \circ (\mathrm{Id} \otimes \delta) \circ \delta = 0$.
\end{itemize}
Here, $\sigma(x \otimes y) = y \otimes x$ and $\tau(x \otimes y \otimes z) = z \otimes x \otimes y$.

Let $W$ denote the vector space with basis the set of free homotopy classes of closed directed curves on $\Sigma$. In \cite{turaev1991skein}, Turaev defined a Lie coalgebra on $W'$, the quotient of $W$ by the (space generated by the) class of the contractible loop, as follows. Let $\alpha$ be a representative of a free homotopy class whose self-intersections are transverse double points. Each self-intersection point $P$ determines two loops based at $P$, denoted by $\alpha_P^1$ and $\alpha_P^2$, such that $\alpha$ is the loop product $\alpha_P^1\alpha_P^2$ at $P$. We order these loops so that the ordered pair consisting of the branch of $\alpha_P^1$ leaving $P$ and the branch of $\alpha_P^2$ leaving $P$ agrees with the orientation of $\Sigma$.

Their free homotopy classes, with the contractible class declared to be $0$, are denoted by $\langle \alpha_P^1\rangle_0$ and $\langle \alpha_P^2\rangle_0$. Define
\[
\delta(\langle \alpha\rangle)
=
\sum_{P\in \cap \alpha}
\left(
\langle \alpha_P^1\rangle_0 \otimes \langle \alpha_P^2\rangle_0
-
\langle \alpha_P^2\rangle_0 \otimes \langle \alpha_P^1\rangle_0
\right),
\]
where $\cap \alpha$ denotes the set of self-intersection points of the chosen representative $\alpha$.

The cobracket $\delta$ is well defined. Extend $\delta$ to $W'$ by linearity. This map is  co-skew symmetric and satisfies the co-Jacobi identity. 

\section*{Acknowledgement} The authors used AI tools to assist with LaTeX typesetting, grammar and language editing, and checking and correcting mathematical arguments. The authors independently verified the final manuscript and take full responsibility for its contents.
 \bibliographystyle{plain}

\bibliography{gla.bib}

@article{kabiraj2018equal,
  title={Equal angles of intersecting geodesics for every hyperbolic metric.},
  author={Kabiraj, Arpan},
  journal={New York Journal of Mathematics},
  volume={24},
  pages={167–181},
  year={2018}
}

@article{chas1999string,
  title={String topology},
  author={Chas, Moira and Sullivan, Dennis},
  journal={arXiv preprint math/9911159},
  year={1999}
}

@article{chas2023explicitliebracketclosed,
      title={Explicit Lie bracket of closed geodesics on a hyperbolic surface with applications}, 
      author={Moira Chas and Arpan Kabiraj},
      journal={arXiv preprint math/2308.02006, to appear in Annales de l'Institut Fourier.},
      year={2023},
      eprint={2308.02006},
      archivePrefix={arXiv},
      primaryClass={math.GT},
      url={https://arxiv.org/abs/2308.02006}, 
      
}

@article{chas2004combinatorial,
  title={Combinatorial {Lie} bialgebras of curves on surfaces},
  author={Chas, Moira},
  journal={Topology},
  volume={43},
  number={3},
  pages={543--568},
  year={2004},
  publisher={Elsevier}
}

@article{chas2010minimal,
	Author = {Chas, Moira},
	Journal = {Geometriae Dedicata},
	Number = {1},
	Pages = {25--60},
	Publisher = {Springer},
	Title = {Minimal intersection of curves on surfaces},
	Volume = {144},
	Year = {2010}}

@article{chas2010algebraic,
	Author = {Chas, Moira and Krongold, Fabiana},
	Journal = {Journal of Topology and Analysis},
	Number = {03},
	Pages = {395--417},
	Publisher = {World Scientific},
	Title = {An algebraic characterization of simple closed curves on surfaces with boundary},
	Volume = {2},
	Year = {2010}}

@article{chas2015algebraic,
	Author = {Chas, Moira and Krongold, Fabiana},
	Journal = {Journal of Topology},
	Number = {1},
	Pages = {91--104},
	Publisher = {Oxford University Press},
	Title = {Algebraic characterization of simple closed curves via Turaev's cobracket},
	Volume = {9},
	Year = {2015}}

@article{chas2016extended,
  title={The extended Goldman bracket determines intersection numbers for surfaces and orbifolds},
  author={Chas, Moira and Gadgil, Siddhartha},
  journal={Algebraic \& Geometric Topology},
  volume={16},
  number={5},
  pages={2813--2838},
  year={2016},
  publisher={Mathematical Sciences Publishers}
}

@article{ledonne2008lie,
  title={On {Lie} bialgebras of loops on orientable surfaces},
  author={Le Donne, Attilio},
  journal={Journal of Knot Theory and Its Ramifications},
  volume={17},
  number={03},
  pages={351--359},
  year={2008},
  publisher={World Scientific}
}

@book{beardon2012geometry,
	Author = {Beardon, Alan F},
	Publisher = {Springer Science \& Business Media},
	Title = {The geometry of discrete groups},
	Volume = {91},
	Year = {2012}}

@article{hass1985intersections,
  author  = {Hass, Joel and Scott, Peter},
  title   = {Intersections of curves on surfaces},
  journal = {Israel Journal of Mathematics},
  volume  = {51},
  pages   = {90--120},
  year    = {1985}
}

@article{goldman_invariant_1986,
	Author = {Goldman, William},
	Journal = {Inventiones Mathematicae},
	Number = {2},
	Pages = {263--302},
	Title = {Invariant functions on {Lie} groups and {Hamiltonian} flows of surface group representations},
	Volume = {85},
	Year = {1986}}

@article{turaev1991skein,
  title   = {Skein quantization of Poisson algebras of loops on surfaces},
  author  = {Turaev, Vladimir G.},
  journal = {Annales scientifiques de l'{\'E}cole normale sup{\'e}rieure},
  volume  = {24},
  number  = {6},
  pages   = {635--704},
  year    = {1991}
}

@article{cahn2013generalization,
  title={A generalization of the Turaev cobracket and the minimal self-intersection number of a curve on a surface},
  author={Cahn, Patricia},
  journal={New York J. Math},
  volume={19},
  pages={253--283},
  year={2013}
}

\end{document}